\numberwithin{equation}{section}
\theoremstyle{plain}
\newtheorem{thm}{Theorem}[section]
\newtheorem{lemma}[thm]{Lemma}
\newtheorem{cor}[thm]{Corollary}
\theoremstyle{definition}
\newtheorem{defn}{Definition}[section]
\theoremstyle{remark}
\newtheorem{remark}{Remark}[section]
\def\Im{\mathop{\rm Im}\nolimits}
\def\Var{\mathop{\rm Var}\nolimits}
\newcommand{\loc}{\text{\rm{loc}}}
\newcommand{\ess}{\text{\rm{ess}}}
\newcommand{\AC}{\text{\rm{AC}}}
\DeclareMathOperator*{\wlim}{w-lim}
\title[Decaying oscillatory potentials]{A class of Schr\"odinger operators with decaying oscillatory potentials}
\author{Milivoje Lukic}
\date\today
\email{milivoje.lukic@rice.edu}
\keywords{Schrodinger operator, bounded variation, almost periodic, decaying potential}
\subjclass[2010]{35J10,34L40,47B36}
\begin{document}

\begin{abstract}
We discuss Schr\"odinger operators on a half-line with decaying oscillatory potentials, such as products of an almost periodic function and a decaying function. We provide sufficient conditions for preservation of absolutely continuous spectrum and give bounds on the Hausdorff dimension of the singular part of the spectral measure. We also discuss the analogs for orthogonal polynomials on the real line and unit circle.
\end{abstract}

\maketitle
\section{Introduction}

In this paper, we investigate half-line Schr\"odinger operators
\begin{equation}\label{1.1}
(Hu)(x) = - u''(x) + V(x) u(x),
\end{equation}
with decaying oscillatory potentials $V:(0,\infty) \to \mathbb{R}$. All operators we consider have $0$ as a regular point and are limit point at $+\infty$. Therefore, the expression \eqref{1.1}, together with a choice of boundary condition $\theta\in [0,\pi)$, defines a Schr\"odinger operator $H$ on $L^2(0,+\infty)$, with the domain
\[
D(H) = \{ u \in L^2(0,+\infty) \mid  u, u' \in \AC_\loc,  -u''+ Vu \in L^2, u'(0) \sin \theta = u(0) \cos \theta \}.
\]
The operator $H$ is self-adjoint, and for every $z \in \mathbb{C}$ with $\Im z > 0$, there is a nontrivial solution of $- u''_z + V u_z = z u_z$ which is square-integrable near $+\infty$. This can be used to define the $m$-function
\[
m(z) =  \frac{u'_z(0) \cos\theta + u_z(0) \sin \theta }{u_z(0) \cos\theta - u'_z(0) \sin\theta},
\]
which, in turn, defines a canonical spectral measure $\mu$ by
\[
d \mu =\tfrac 1\pi \wlim_{\epsilon \downarrow 0}  m(x+i\epsilon) dx
\]
(the weak limit is with respect to continuous functions of compact support). The importance of $\mu$ lies in the fact that the operator $H$ is unitarily equivalent to multiplication by $x$ on $L^2(\mathbb{R},d\mu(x))$.

The potentials we consider decay at $+\infty$, so $\sigma_\ess(H)=[0,+\infty)$. The purpose of this paper is to characterize the type of spectrum on $[0,+\infty)$. More precisely, for $E>0$, we study generalized eigenfunctions of $H$, i.e.\ solutions of
\begin{equation}\label{1.2}
- u''(x) + V(x) u(x) = E u(x)
\end{equation}
and estimate the Hausdorff dimension of
\begin{equation}\label{1.3}
S = \left\{ E > 0  \mid \text{not all solutions of \eqref{1.2} are bounded}\right\}.
\end{equation}
The importance of the set $S$, from a spectral theorist's point of view, is that by the work of 
Gilbert--Pearson~\cite{GilbertPearson87}, Behncke~\cite{Behncke91} and Stolz~\cite{Stolz92}, on $(0,+\infty)\setminus S$, $\mu$ is mutually absolutely continuous with the Lebesgue measure.

We denote by $\Var(\gamma,I)$ the variation of the function $\gamma$ on the interval $I$,
\[
\Var(\gamma,I) = \sup_{k\in\mathbb{N}} \sup_{\substack{x_0, \dots,x_k\in I \\ x_0 < \dots < x_k}} \sum_{j=1}^k \lvert \gamma(x_j) - \gamma(x_{j-1})\rvert.
\]

The following is our main result.

\begin{thm}\label{T1.1} Let the potential $V$ be given by
\begin{equation}\label{1.4}
V(x) = \sum_{k=1}^\infty c_k e^{-i\phi_k x} \gamma_k(x),
\end{equation}
where the following conditions hold:
\begin{enumerate}[(i)]
\item (uniformly bounded variation) $\gamma_k(x)$ are functions of bounded variation whose variation is bounded uniformly in $k$, i.e.
\begin{equation}
\sup_k \Var(\gamma_k,(0,\infty)) < \infty;
\end{equation}
\item (uniform $L^p$ condition) for some $p\in \mathbb{Z}$, $p\ge 2$,
\begin{equation}
\int_0^\infty \left( \sup_k \lvert \gamma_k(x)\rvert \right)^p dx < \infty;
\end{equation}
\item (decay of coefficients) for some $\alpha \in (0,\frac 1{p-1})$,
\begin{equation}\label{1.7}
\sum_{k=1}^\infty \lvert c_k \rvert^{\alpha} < \infty.
\end{equation}
\end{enumerate}
Then the set $S$ given by \eqref{1.3} has Hausdorff dimension at most $(p-1)\alpha$, and $[0,\infty)$ is the essential support of the absolutely continuous spectrum of $H$.
\end{thm}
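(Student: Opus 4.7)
The plan is to study solutions of \eqref{1.2} via modified Prüfer variables and reduce the theorem to bounding certain oscillatory integrals involving $V$. For $E=k^2>0$, write a real solution $u$ of \eqref{1.2} as $u(x)=R(x)\sin(kx+\theta(x))$, $u'(x)=kR(x)\cos(kx+\theta(x))$. A standard calculation yields
\[
(\log R(x)^2)' = -\frac{V(x)}{k}\sin(2kx+2\theta(x)), \qquad \theta'(x) = -\frac{V(x)}{k}\sin^2(kx+\theta(x)).
\]
Boundedness of all solutions at energy $E$ is equivalent to boundedness of $R$, so it suffices to bound $\log R$ for $E$ outside a suitable exceptional set. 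Expanding $\sin(2kx+2\theta)$ in exponentials and substituting \eqref{1.4}, the right-hand side becomes a sum over $k$ of terms of shape $\frac{c_k}{k}\gamma_k(x)\, e^{i(\epsilon\cdot 2k-\phi_k)x}\, e^{2i\epsilon\theta(x)}$ with $\epsilon\in\{+1,-1\}$.

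Next I would integrate by parts in $x$, exploiting the oscillation $e^{i(\epsilon\cdot 2k-\phi_k)x}$. The bounded-variation hypothesis (i) is essential here: the derivative is distributed onto $\gamma_k$ (using $d\gamma_k$, of total variation bounded uniformly in $k$) and onto $e^{2i\epsilon\theta}$ (using the Prüfer equation for $\theta'$, which reintroduces a factor of $V$). Each such integration by parts gains a small denominator $(\epsilon\cdot 2k-\phi_k)^{-1}$. Iterating $p-1$ times produces a multilinear expansion indexed by tuples $(k_1,\dots,k_{p-1})$, whose innermost remainder is bounded pointwise by a product $\prod_j |\gamma_{k_j}(x)|$, integrable by Hölder's inequality and the uniform $L^p$ bound (ii). Summing over all tuples, one uses $\sum|c_k|^\alpha<\infty$ with $\alpha<\tfrac{1}{p-1}$ to make each level of the iteration absolutely convergent.

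The bad energies are those where the estimate fails because of small denominators. For a fixed tuple $(k_1,\dots,k_{p-1})$, $E=k^2$ is problematic only when some $|\epsilon_j\cdot 2k-\phi_{k_j}|$ is small, i.e.\ $E$ lies near $\phi_{k_j}^2/4$. The corresponding exceptional set can be covered by balls whose $(p-1)\alpha$-Hausdorff content is controlled by $\sum_k|c_k|^\alpha$. This yields $\dim_H S\le(p-1)\alpha$. The statement about the a.c.\ spectrum then follows from the Gilbert--Pearson/Behncke/Stolz subordinacy result cited in the introduction, applied on $(0,\infty)\setminus S$ where all solutions of \eqref{1.2} are bounded.

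The main obstacle is executing the multilinear iteration cleanly: one must track the precise number of small-denominator factors after $p-1$ rounds of integration by parts, separate boundary terms from volume terms, and verify that the innermost remainder is controlled using the $L^p$ hypothesis and not an $L^{p+1}$ hypothesis. The balance $\alpha<\tfrac{1}{p-1}$ is exactly what is needed to match the combinatorics of the covering to the exponent $p-1$ arising from the iteration; showing that this balance is achieved, rather than merely conjectured, is the heart of the proof.
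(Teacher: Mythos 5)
Your overall strategy (modified Pr\"ufer variables, iterating the oscillatory-integral estimate $p-1$ times, then estimating the dimension of the energies where small denominators obstruct the iteration) is the same as the paper's. But two steps in your sketch would fail as described. First, you misidentify the exceptional set. After the first integration by parts the denominator is indeed $\eta-\phi_{k_1}$ (with $\eta=2k$), but the term that gets fed back through $\theta'$ carries the combined oscillation $e^{-i(\phi_{k_1}+\phi_{k_2})x}$, so the $j$-th stage produces denominators $\eta-(\phi_{k_1}+\dots+\phi_{k_j})$ nested inside products --- exactly the functions $h_j$ of \eqref{1.9}. The bad set is where the multilinear sum \eqref{1.11} diverges for some $j\le p-1$, not a union of balls around the single points $\phi_k^2/4$; indeed, if it were only the latter, the dimension bound would be $\alpha$, whereas the exponent $(p-1)\alpha$ in the theorem comes precisely from products of up to $p-1$ denominators. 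Relatedly, ``covered by balls whose $(p-1)\alpha$-Hausdorff content is controlled by $\sum_k|c_k|^\alpha$'' is not an argument: divergence of \eqref{1.11} can occur away from any single resonance when the $\phi_k$ accumulate. The paper instead argues by duality (Lemmas~\ref{L4.1}--\ref{L4.2}): if $\dim_H$ of the bad set exceeded $j\alpha$, Frostman's lemma would give a finite uniformly $\beta$-H\"older measure on it, against which $\int\sum|c_{k_1}\cdots c_{k_j}h_j|^\alpha\,d\nu$ is finite (via H\"older's inequality and a Catalan-number recursion), forcing \eqref{1.11} to hold $\nu$-a.e.\ --- a contradiction.

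Second, you do not address the non-oscillatory terms. Your $\theta'$ equation contains $\sin^2(kx+\theta)=\tfrac12-\tfrac12\cos(2kx+2\theta)$, and the constant part generates, at each stage, terms with no Pr\"ufer-phase oscillation ($K=0$ in the paper's notation) whose only oscillation is $e^{-i(\phi_{k_1}+\dots+\phi_{k_j})x}$. Integrating these by parts produces a singularity at $\phi_{k_1}+\dots+\phi_{k_j}=0$, which is \emph{not} controlled by any hypothesis of the theorem and is not of the form $\eta-\sum\phi$. The paper removes it by exploiting that $V$ is real: pairing each such term with its complex conjugate (frequencies $-\phi_{k_i}$), taking imaginary parts, and using the algebraic identity \eqref{2.22}, which shows the difference $f_{J,0}-\breve f_{J,0}$ carries an explicit factor $\phi_1+\dots+\phi_J$ cancelling the offending denominator (Lemma~\ref{L3.4}). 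This cancellation --- and, more generally, the bookkeeping of the coefficients $f_{J,K},g_{J,K}$ and the identities of Lemma~\ref{L2.2} that localize all genuine singularities to the $h_j$'s --- is the combinatorial heart of the proof and is exactly the part your outline defers to ``executing the multilinear iteration cleanly.''
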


Note that conditions (i)--(ii) above imply that $\lim_{x\to\infty} \gamma_k(x) = 0$ for all $k$.

Bounded variation conditions have been analyzed in spectral theory since Weidmann's theorem~\cite{Weidmann67}, but finite sums of the form \eqref{1.4} were first analyzed by Wong~\cite{Wong09}, in the setting of orthogonal polynomials on the unit circle, in the $L^2$ case. In the Schr\"odinger operator literature, Wigner--von Neumann type potentials  have attracted attention since Wigner--von Neumann~\cite{WignerVonNeumann29} and have been studied by Atkinson~\cite{Atkinson54}, Harris--Lutz~\cite{HarrisLutz75}, Reed--Simon~\cite[Thm XI.67]{ReedSimon3}, Ben-Artzi--Devinatz~\cite{Ben-ArtziDevinatz79} and Janas--Simonov~\cite{JanasSimonov10}. Those results are mostly restricted to the $L^2$ case, with the exception of  Janas--Simonov~\cite{JanasSimonov10} which includes the $L^3$ case.

Theorem~\ref{T1.1} continues our earlier work in \cite{Lukic5}, which was, in turn, the analog of the work \cite{Lukic1} on orthogonal polynomials. It is proved in \cite{Lukic5} that if the potential $V$ is given by a sum of the form \eqref{1.4}, with only finitely many non-zero terms and $V\in L^p$, then $S$ is a subset of an explicit finite set which depends only on $p$ and the set of frequencies $\phi_k$. The constructions of Kr\"uger~\cite{Kruger12} and Lukic~\cite{Lukic5} show that this result is optimal; in particular, when there are finitely many terms, the $p$-dependence of possible singular spectrum is a real phenomenon, and not just an artifact of the method. This encourages us to conjecture that the $p$-dependence of possible Hausdorff dimension in Theorem~\ref{T1.1} is also a real phenomenon; however, no such result is presently known.

In the special case when all the $\gamma_k$ are equal, the potential becomes the product of an almost periodic function and a decaying function.

\begin{cor}\label{C1.2} Let $V(x) = \gamma(x) W(x)$, where the following conditions hold:
\begin{enumerate}[(i)]
\item $\gamma(x)$ has bounded variation;
\item $W(x)$ is an almost periodic function given by
\begin{equation}
W(x) = \sum_{k=1}^\infty c_k e^{-i\phi_k x},
\end{equation}
with \eqref{1.7} satisfied for some $\alpha \in (0,\frac 1{p-1})$;
\item $V \in L^p(0,\infty)$ for some $p\in \mathbb{Z}_+$, $p\ge 2$.
\end{enumerate}
Then the set $S$ given by \eqref{1.3} has Hausdorff dimension at most $(p-1)\alpha$, and $[0,\infty)$ is the essential support of the absolutely continuous spectrum of $H$.
\end{cor}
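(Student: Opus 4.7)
The approach is to deduce Corollary~\ref{C1.2} directly from Theorem~\ref{T1.1} by choosing $\gamma_k(x) = \gamma(x)$ for every $k$, so that \eqref{1.4} reproduces $V = \gamma W$. Conditions (i) and (iii) of Theorem~\ref{T1.1} then hold at once from (i) and (ii) of the corollary, respectively. The only nontrivial step is verifying condition (ii) of the theorem, which in this setting reduces to $\int_0^\infty |\gamma|^p\, dx < \infty$; our hypothesis instead furnishes only the weaker $\int_0^\infty |\gamma W|^p\, dx < \infty$, so the gap between these two statements is the entire content of the proof.

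To close this gap I would invoke the mean value property of Bohr almost periodic functions. Since $\alpha < 1$, the condition $\sum |c_k|^\alpha < \infty$ forces $\sum |c_k| < \infty$ (the tail terms satisfy $|c_k|\le |c_k|^\alpha$), so the series defining $W$ converges absolutely and uniformly, and both $W$ and $|W|^p$ are uniformly almost periodic. Assuming $W\not\equiv 0$ (the other case is trivial), the mean $M := \lim_{T\to\infty}\tfrac{1}{T}\int_0^T |W|^p\,dx$ is strictly positive, and by uniformity of the convergence in the starting point, one can fix a single $L>0$ so that $\int_t^{t+L}|W|^p\,dx \ge LM/2$ for every $t\ge 0$.

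Setting $m(t) := \min_{x\in[t,t+L]} |\gamma(x)|$, the bound $|V|^p = |\gamma|^p|W|^p \ge m(t)^p |W|^p$ on $[t,t+L]$ gives $m(t)^p \le \tfrac{2}{LM}\int_t^{t+L}|V|^p\,dx$; integrating in $t$ and applying Fubini yields $\int_0^\infty m(t)^p\,dt \le \tfrac{2}{M}\int_0^\infty|V|^p\,dx < \infty$. A standard BV estimate then produces $|\gamma(t)| \le m(t-L) + \Var(\gamma,[t-L,t])$ for $t\ge L$: the function $t \mapsto \Var(\gamma,[t-L,t])$ is uniformly bounded by $\Var(\gamma,(0,\infty))$ and has $L^1$ norm at most $L\,\Var(\gamma,(0,\infty))$, hence lies in every $L^p$. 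Combining these two facts yields $\gamma\in L^p(0,\infty)$, which allows Theorem~\ref{T1.1} to be invoked and completes the proof.

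The delicate step in this plan is the uniformity over $t\ge 0$ of the lower bound for $\int_t^{t+L}|W|^p\,dx$; this is precisely the classical mean value property for uniformly almost periodic functions and follows from the uniform convergence of the Fourier series of $W$. Once this is in hand, the rest of the argument is elementary analysis.
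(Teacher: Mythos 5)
Your proposal is correct and follows essentially the same route as the paper: reduce to Theorem~\ref{T1.1} by taking all $\gamma_k$ equal to $\gamma$, then close the only gap by showing $\gamma W\in L^p$ implies $\gamma\in L^p$, using the uniform positive lower bound on $\int_t^{t+L}\lvert W\rvert^p\,dx$ from almost periodicity together with the bounded-variation control on the oscillation of $\gamma$ over a window. The paper first reduces to $p=1$ and uses window averages of $\lvert\gamma\rvert$ where you use window infima, but these are cosmetic differences.
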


Corollary \ref{C1.2} is an immediate consequence of Theorem~\ref{T1.1}, except for the observation that the $L^p$ condition can be moved from $V(x)$ to $\gamma(x)$, which is proved later. We singled out this special case because it was the main motivation for our work.  For various classes of functions $W(x)$, multiplied by a decaying $\gamma(x)$, it has been studied which rate of decay preserves a.c.\ spectrum. If, instead of being almost periodic, $W(x)$ was sparse (Pearson \cite{Pearson78}, Kiselev--Last--Simon \cite{KiselevLastSimon98}) or random (Delyon--Simon--Souillard \cite{DelyonSimonSouillard85}, Kotani--Ushiroya \cite{KotaniUshiroya88}, Kiselev--Last--Simon \cite{KiselevLastSimon98}), $L^2$ decay of $V$ would be critical for preservation of a.c.\ spectrum; however, if $W(x)$ was periodic, any decay would suffice to preserve a.c.\ spectrum (Golinskii--Nevai \cite{GolinskiiNevai01}). The answer for almost periodic $W(x)$ has been more elusive; Corollary~\ref{C1.2} gives a partial answer, providing a sufficient condition for preservation of a.c.\ spectrum.

The core of the method is summarized by the following technical lemma. To state the lemma, we need to introduce functions $h_{j}$ of $1+j$ variables, defined recursively by $h_{0}(\eta) = 1$ and
\begin{equation}\label{1.9}
h_{J}(\eta;\phi_1,\dots,\phi_J) = \frac 1{\eta - \phi_1 - \dots - \phi_J} \sum_{j=0}^{J-1} h_{j} (\eta;\phi_{1},\dots,\phi_{j}) h_{J-j-1} (\eta;\phi_{{j+1}},\dots,\phi_{{J-1}})
\end{equation}

\begin{lemma}\label{L1.3}
Let the potential $V$ be given by \eqref{1.4}, and let $\eta\in (0,\infty)$, so that the following conditions hold:
\begin{enumerate}[(i)]
\item (uniformly bounded variation) same as condition (i) of Theorem~\ref{T1.1};
\item (uniform $L^p$ condition) same as condition (ii) of Theorem~\ref{T1.1};
\item (decay of coefficients)
\begin{equation}
\sum_{k=1}^\infty \lvert c_k \rvert < \infty;
\end{equation}
\item (small divisor conditions) for $j=1,\dots,p-1$,
\begin{equation}\label{1.11}
\sum_{k_1,\dots,k_j=1}^\infty \left\lvert  c_{k_1} \dotsm c_{k_j} h_{j}(\eta;\phi_{k_1},\dots,\phi_{k_j}) \right\rvert < \infty.
\end{equation}
\end{enumerate}
Then, for $E=\frac{\eta^2}4$,  all solutions of \eqref{1.2} are bounded.
\end{lemma}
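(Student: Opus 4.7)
The plan is to pass to modified Prüfer variables and reduce boundedness of solutions to an oscillatory integral estimate, which I will then control by iterated integration by parts whose combinatorial structure matches the recursion \eqref{1.9}. With $k=\eta/2=\sqrt{E}$, write a solution $u$ of \eqref{1.2} as $u=R\sin\psi$, $u'=kR\cos\psi$. A standard calculation gives
\begin{equation*}
(\log R)'(x)=\frac{V(x)}{2k}\sin 2\psi(x),\qquad \psi'(x)=k-\frac{V(x)}{2k}\bigl(1-\cos 2\psi(x)\bigr),
\end{equation*}
so boundedness of every solution is equivalent to $\log R$ being bounded as $x\to\infty$, i.e.\ to uniform boundedness in $X$ of $\int_0^X V\sin 2\psi\,dx$. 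Integrating the phase equation yields $2\psi(x)=2\psi(0)+\eta x+\Theta(x)$, where $\Theta(x)=-\tfrac{1}{k}\int_0^x V\sin^2\psi$. Thus $e^{\pm 2i\psi(x)}$ factors as a pure oscillation $e^{\pm i\eta x}$ times a ``slow'' amplitude $e^{\pm i\Theta(x)}$ whose differential brings in an extra factor of $V$.

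Expanding $V$ via \eqref{1.4} and $\sin 2\psi$ as $(e^{2i\psi}-e^{-2i\psi})/(2i)$, the integral to be bounded becomes $\sum_k c_k\int_0^X \gamma_k(x)e^{i(\eta\pm\phi_k)x}e^{\pm i\Theta(x)}\,dx$, up to a constant phase. I would use $e^{i(\eta\pm\phi_k)x}$ as a derivative---legitimate because (iv) for $j=1$ implies $\eta\pm\phi_k\neq 0$---and apply Stieltjes integration by parts together with the product rule
\begin{equation*}
d\bigl(\gamma_k\,e^{\pm i\Theta}\bigr)=e^{\pm i\Theta}\,d\gamma_k \;\mp\;\frac{i}{k}\,V(x)\bigl(1-\cos 2\psi(x)\bigr)\gamma_k(x)e^{\pm i\Theta(x)}\,dx.
\end{equation*}
This produces three kinds of output: (a) a boundary term of modulus $\le 2\|\gamma_k\|_\infty/|\eta\pm\phi_k|$; (b) a Stieltjes term of modulus $\le \Var(\gamma_k,(0,\infty))/|\eta\pm\phi_k|$; (c) a new oscillatory integral in which the amplitude has acquired an additional factor $\gamma_{k'}$, the coefficient has acquired a factor $c_{k'}$ and a denominator $\eta-\phi_k$, and the exponential frequency has shifted from $\eta\pm\phi_k$ to $\eta\pm\phi_k\pm\phi_{k'}$. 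After summing over $k$, the contributions (a) and (b) are controlled by $\sum_k |c_k\,h_1(\eta;\phi_k)|$, finite by (iv).

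I then iterate this procedure. At stage $j$, any undischarged integral has the form
\begin{equation*}
\sum_{k_1,\dots,k_j}c_{k_1}\cdots c_{k_j}\,A_j(\eta;\phi_{k_1},\dots,\phi_{k_j})\int_0^X\gamma_{k_1}\cdots\gamma_{k_j}\,V\,e^{i\Phi_j(x)}e^{\pm i\Theta(x)}\,dx,
\end{equation*}
where $A_j$ is built from reciprocals of partial sums $\eta-\phi_{k_{i_1}}-\cdots-\phi_{k_{i_\ell}}$. The branching at each step between ``terminate via $d\gamma$ or a boundary'' and ``deepen via the $V$ produced by $d(e^{\pm i\Theta})$'' reproduces the Catalan-type split $h_j\,h_{J-j-1}$ in \eqref{1.9}, and induction on $j$ yields $|A_j|\le C\,|h_j(\eta;\phi_{k_1},\dots,\phi_{k_j})|$. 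Hypothesis (iv) therefore renders the boundary and Stieltjes outputs at each level $j\le p-1$ absolutely summable and bounded in $X$. At level $p$ I would stop and bound the deep integrand pointwise by $|c_{k_1}\cdots c_{k_p}|\bigl(\sup_k|\gamma_k(x)|\bigr)^p$; together with $\sum_k|c_k|<\infty$ from (iii) and $\sup_k|\gamma_k|^p\in L^1$ from (ii), this yields $\le(\sum_k|c_k|)^p\int_0^\infty(\sup_k|\gamma_k|)^p\,dx<\infty$, uniformly in $X$. Combining all levels bounds $\log R(X)$.

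The main obstacle I anticipate is the combinatorial bookkeeping---verifying that the algebraic coefficients produced by the branching iterated integration by parts really are governed by \eqref{1.9}, and in particular that the ``left factor times right factor'' split on $\phi_1,\dots,\phi_{J-1}$ (with $\phi_J$ appearing only in the outermost denominator) matches the order in which exponential frequencies accumulate during the iteration. A secondary difficulty is that $e^{\pm i\Theta}$ is not itself of bounded variation when $V\notin L^1$, so one must always integrate against the pure $e^{\pm i\eta x}$ oscillation and keep $e^{\pm i\Theta}$ inside the amplitude, never the other way around.
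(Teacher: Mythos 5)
Your overall strategy is the paper's: modified Pr\"ufer variables, iterated oscillatory integration by parts whose branching reproduces the recursion \eqref{1.9}, termination at depth $p$ via the uniform $L^p$ bound, and Stieltjes/Fubini handling of the $\gamma_k$ so that each discharge costs a factor of the total variation. The bookkeeping you flag as the ``main obstacle'' is exactly what the functions $f_{J,K}$, $g_{J,K}$ and the identities \eqref{2.21}, \eqref{2.23} accomplish, and that part of your plan would go through.

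However, there is a genuine gap that your iteration cannot close as described: the terms in which the $e^{iK\eta x}$ oscillation is cancelled entirely ($K=0$). When you deepen via $d(e^{\pm i\Theta})$, the factor $V(1-\cos 2\psi)$ expands into the three exponentials $1$, $e^{2i\psi}$, $e^{-2i\psi}$, and the branch $e^{-2i\psi}$ applied to a $K=1$ integrand produces, already at level $2$, integrands of the form $\gamma_{k_1}\gamma_{k_2}e^{-i(\phi_{k_1}+\phi_{k_2})x}$ with \emph{no} factor $e^{\pm i\Theta}$ left. These cannot be deepened further (there is no slow amplitude to differentiate), cannot be discharged at level $j<p$ by the $L^p$ bound (the amplitude is only in $L^{p/j}$), and integrating them against $e^{-i(\phi_{k_1}+\cdots+\phi_{k_j})x}$ produces denominators $\phi_{k_1}+\cdots+\phi_{k_j}$ that may vanish or be arbitrarily small; hypothesis (iv) controls only denominators of the form $\eta-\phi_{k_1}-\cdots-\phi_{k_\ell}$ and says nothing about these. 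The paper's resolution (Lemmas \ref{L3.4} and \ref{L2.2}(ii)) is to observe that only the \emph{imaginary} part of the integral contributes to $\log R$, to use the reality of $V$ to pair each $K=0$ term with its complex conjugate (all $\phi$'s negated), and to invoke the algebraic identity $f_{J,0}-\breve f_{J,0}=(\phi_1+\cdots+\phi_J)\mathcal G_{J,0}$, whose explicit factor of $\phi_1+\cdots+\phi_J$ cancels the dangerous denominator; the surviving factor $(\phi_1+\cdots+\phi_J)e^{-i(\phi_1+\cdots+\phi_J)x}\Gamma(x)$ is then bounded uniformly in the frequency by the $K=0$ case of Lemma \ref{L2.1}. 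Without some version of this cancellation your scheme stalls at level $2$ whenever the frequency set $\{\phi_k\}$ has sums accumulating at $0$, which is the generic situation for infinitely many frequencies.
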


\begin{remark}
The proof of Lemma~\ref{L1.3} shows that for real solutions $u(x)$, the quantity
\begin{equation}\label{1.12}
u'(x)^2 + E u(x)^2
\end{equation}
is bounded as $x\to \infty$, and a simple modification (pointed out in the proof) also shows that \eqref{1.12} converges as $x\to\infty$. However, the solution $u(x)$ does \emph{not}, except in special cases, obey WKB asymptotics in its usual form. This is because for $p>2$, there are correction terms in the Pr\"ufer phase which depend on the frequencies $\phi_j$, and cannot be expressed directly in terms of $V(x)$.
\end{remark}

We also present the analogs of Theorem~\ref{T1.1} for orthogonal polynomials on the real line and unit circle. Their proofs are largely analogous, so we will only explain the necessary modifications. We first state the result for orthogonal polynomials on the real line (OPRL).

\begin{thm}\label{T1.4} Let $\rho$ be a nontrivial probability measure on $\mathbb{R}$ with Lebesgue decomposition $d\rho = f(x) dx + d\rho_s$ into an absolutely continuous and a singular part. Let $\rho$ have diagonal Jacobi coefficients $\{b_n\}_{n=1}^\infty$ and off-diagonal Jacobi coefficients $\{a_n\}_{n=1}^\infty$. 
 
 Assume that there is an integer $p\in \mathbb{Z}$, $p\ge 2$, and a real number $\beta \in (0,\frac 1{p-1})$, such that each of the sequences $\{a_n^2-1\}_{n=1}^\infty$, $\{b_n\}_{n=1}^\infty$ can be written in the form
\begin{equation}\label{1.13}
 \sum_{l=1}^\infty c_l e^{-in\phi_l} \gamma^{(l)}_n,
\end{equation}
such that the following conditions hold:
\begin{enumerate}[(i)]
\item (uniformly bounded variation)
\begin{equation}
\sup_l \sum_{n=1}^\infty \lvert \gamma_{n+1}^{(l)} - \gamma_n^{(l)} \rvert <  \infty;
\end{equation}
\item (uniform $\ell^p$ condition)
\begin{equation}
\sum_{n=1}^\infty \left( \sup_l \lvert \gamma_n^{(l)} \rvert  \right)^p < \infty;
\end{equation}
\item (decay of coefficients)
\begin{equation}
\sum_{k=1}^\infty \lvert c_k \rvert^{\beta} < \infty.
\end{equation}
\end{enumerate}
Then there is a set $S$ of Hausdorff dimension at most $\beta(p-1)$ with $\rho_s( (-2,2) \setminus S)=0$, and $f(x)>0$ for Lebesgue-a.e.\ $x \in (-2,2)$. 
\end{thm}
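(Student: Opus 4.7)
The plan is to mirror the structure of the proof of Theorem~\ref{T1.1}, adapting each ingredient to the Jacobi-matrix setting. The essential spectrum of the associated Jacobi matrix is $[-2,2]$, so for interior energies $E\in(-2,2)$ I parametrize $E=2\cos\eta$ with $\eta\in(0,\pi)$. The first step is to introduce discrete Pr\"ufer variables $(R_n,\theta_n)$ for the recursion $a_n u_{n+1}+(b_n-E)u_n+a_{n-1}u_{n-1}=0$, chosen so that in the unperturbed case ($a_n\equiv 1$, $b_n\equiv 0$) $R_n$ is constant and $\theta_{n+1}-\theta_n=\eta$. A direct computation then expands $\log R_{n+1}-\log R_n$ as a polynomial in $a_n^2-1$ and $b_n$ whose coefficients are trigonometric polynomials in $\theta_n$; schematically, the pair $(a_n^2-1,b_n)$ plays the role of the potential $V$ in the continuum problem.

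Substituting the decomposition \eqref{1.13} for both $a_n^2-1$ and $b_n$, I would next prove a discrete analog of Lemma~\ref{L1.3}: for any $\eta$ satisfying the discrete versions of hypotheses (i)--(iv) of that lemma, the quantity $\log R_n$ remains bounded, so all solutions of the recursion are bounded. The combinatorial content is identical to the Schr\"odinger case: iterated Abel summation (the discrete integration by parts) produces, after $j$ iterations, oscillatory sums with combined frequency $\phi_{k_1}+\dots+\phi_{k_j}$ and coefficients $c_{k_1}\dotsm c_{k_j}\,h_j(\eta;\phi_{k_1},\dots,\phi_{k_j})$; uniform bounded variation and the uniform $\ell^p$ condition control boundary and remainder terms at each stage, and the iteration terminates after $p$ steps because the surviving $p$-fold products are absolutely summable by H\"older and hypothesis (ii).

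The Hausdorff-dimension estimate is then inherited from the corresponding step in Theorem~\ref{T1.1}: the set of $\eta\in(0,\pi)$ for which the small-divisor condition \eqref{1.11} fails for at least one $j\le p-1$ has Hausdorff dimension at most $(p-1)\beta$, via a covering argument that exploits $\sum_k|c_k|^\beta<\infty$ to place balls of Diophantine radii around the resonance points $\phi_{k_1}+\dots+\phi_{k_j}$ and sum the resulting $((p-1)\beta)$-measure contributions. Because $E\mapsto\eta=\arccos(E/2)$ is bi-Lipschitz on compact subintervals of $(-2,2)$, the corresponding exceptional energy set $S$ inherits the same dimension bound. Boundedness of all solutions on $(-2,2)\setminus S$, together with the OPRL subordinacy theory (the Jitomirskaya--Last extension of Gilbert--Pearson to the Jacobi setting), then yields both $\rho_s((-2,2)\setminus S)=0$ and $f(x)>0$ for Lebesgue-a.e.\ $x\in(-2,2)$.

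The main obstacle I anticipate is the bookkeeping in the second step: the discrete Pr\"ufer expansion has extra trigonometric factors in $\theta_n$ and involves two simultaneous sequences rather than a single $V(x)$, so care is needed to ensure that every level of the Abel-summation recursion reproduces precisely the denominators $\eta-\phi_{k_1}-\dots-\phi_{k_j}$ that feed into \eqref{1.9}, and not extraneous combinations such as $2\eta\pm\phi_{k_1}\pm\dots$ (these harmonics arise from the expansion of powers of $\sin\theta_n$, $\cos\theta_n$, and must either be shown to be non-resonant or absorbed into lower-order terms). Once this is checked, both the recursive identification of the small-divisor functions $h_j$ and the dimension-counting argument for $S$ carry over from the Schr\"odinger setting with only cosmetic changes.
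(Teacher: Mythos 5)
Your outline follows the same overall architecture as the paper's proof (which is itself only sketched, in Section~\ref{S6}): discrete Pr\"ufer variables, an iterated summation-by-parts scheme terminating at level $p$ via the uniform $\ell^p$ hypothesis, a Hausdorff-dimension bound on the exceptional set of $\eta$, and OPRL subordinacy theory to convert boundedness of solutions into the spectral conclusion. Two points of substance differ. First, the obstacle you correctly identify --- two driving sequences and extraneous harmonics --- is resolved in the paper not by tracking $a_n^2-1$ and $b_n$ separately but by packaging them into a single complex sequence $\alpha_n=(a_n^2-1+e^{i\eta/2}b_{n+1})/(e^{i\eta}-1)$ feeding a unified OPUC/OPRL Pr\"ufer recursion \eqref{6.1}--\eqref{6.2}; this is also why the hypothesis is naturally stated for $a_n^2-1$ rather than $a_n-1$. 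The resulting denominators are $e^{-i(k\eta-\phi)}-1$ rather than $\eta-\phi$, the higher harmonics are controlled by the four-index function algebra $f_{I,J,K,L}$, $g_{I,J,K,L}$ of \cite{Lukic1} (including an indirect existence argument for the analog of \eqref{2.22} that kills the $K=0$ terms), and only the $k=1$ resonances survive --- so this step is genuinely more than cosmetic bookkeeping, though it is imported wholesale from \cite{Lukic1}.

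Second, and this is the one place where your mechanism as described would not go through: the dimension bound is not obtained in the paper by covering the resonance points $\phi_{k_1}+\dots+\phi_{k_j}$ with balls of prescribed radii. For $j\ge 2$ the set where the nested small-divisor sum \eqref{1.11} diverges is not a neighborhood of the resonance set, because $h_j$ in \eqref{1.9} is a product of denominators $\eta-\phi_{k_1}-\dots-\phi_{k_i}$ over intermediate partial sums, and divergence can occur at points only approached by resonances. The paper instead argues by contradiction: if the exceptional set had dimension exceeding $j\beta$, Frostman's lemma would produce a finite uniformly $\beta'$-H\"older measure $\nu$ charging it, and Lemma~\ref{L4.1}(ii) (with its Catalan-number recursion matching the recursive structure of $h_j$) shows $\int\sum|c_{k_1}\dotsm c_{k_j}h_j|^{\beta}\,d\nu<\infty$, forcing \eqref{1.11} to hold $\nu$-a.e. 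Since the discrete singularities are still first order, this argument carries over verbatim; you should replace your covering argument with it (or supply a genuinely new covering proof, which would require handling the nested partial-sum denominators).
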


\begin{remark}
The above theorem assumes that the sequence $\{a_n^2-1\}_{n=1}^\infty$ is of the form \eqref{1.13} and obeys the conditions listed there. The sequence $\{a_n^2-1\}_{n=1}^\infty$ appears naturally in the proof, but for a spectral theorist, it would be more natural to pose conditions on $\{a_n-1\}_{n=1}^\infty$. However, if $a_n-1 =\text{\eqref{1.13}}$, then
\begin{align*}
a_n^2 - 1 & = (a_n-1)^2 + 2(a_n-1) \\
& = \sum_{k,l=1}^\infty c_k c_l e^{-in(\phi_k+\phi_l)} \gamma^{(k)}_n \gamma^{(l)}_n + 2 \sum_{l=1}^\infty c_l e^{-in\phi_l} \gamma^{(l)}_n
\end{align*}
is of the same form (with the same values of $p$ and $\beta$), so there is an immediate corollary where the condition is applied on $\{a_n-1\}_{n=1}^\infty$ instead.
\end{remark}

The next result is for orthogonal polynomials on the unit circle (OPUC).

\begin{thm}\label{T1.5}
Let $\mu$ be a nontrivial probability measure on $\partial\mathbb{D}$ with Lebesgue decomposition $d\mu = w(\theta)\frac{d\theta}{2\pi}+d\mu_s$ into an absolutely continuous and a singular part. Let $\mu$ have Verblunsky coefficients $\{\alpha_n\}_{n=0}^\infty$ of the form
\begin{equation}\label{1.17}
\alpha_n = \sum_{l=1}^\infty c_l e^{-in\phi_l} \gamma^{(l)}_n,
\end{equation}
such that conditions (i)--(iii) of Theorem~\ref{T1.4} hold with some \emph{odd} integer $p\in \mathbb{Z}$, $p\ge 3$, and some $\beta \in (0,\frac 1{p-2})$. Then there is a set $S$ of Hausdorff dimension at most $\beta(p-2)$ with $\mu_s(\partial \mathbb{D} \setminus S)=0$, and $w(\theta)>0$ for Lebesgue-a.e.\ $\theta$.
\end{thm}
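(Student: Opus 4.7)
The plan is to follow the architecture of Theorems~\ref{T1.1} and \ref{T1.4} and adapt it to the Szeg\H{o} recursion: first prove an OPUC analogue of Lemma~\ref{L1.3} that, for $\theta$ satisfying appropriate small-divisor conditions, yields bounded Pr\"ufer-type variables; then bound the Hausdorff dimension of the set of exceptional $\theta$'s; finally invoke OPUC subordinacy theory to obtain the spectral consequences.

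For the OPUC analogue of Lemma~\ref{L1.3}, I fix $\theta$ and parameterize solutions of the Szeg\H{o} recursion at $z=e^{i\theta}$ by Pr\"ufer variables $\varphi_n(e^{i\theta}) = R_n e^{i\psi_n}$. The Szeg\H{o} recursion yields a multiplicative update whose logarithm takes the form
\[
\log R_{n+1}^2 - \log R_n^2 = -\log(1-|\alpha_n|^2) + \log\bigl|1-\bar\alpha_n e^{-i\Theta_n}\bigr|^2,
\]
with $\Theta_n = n\theta - 2\psi_n + \mathrm{const}$, together with a companion formula for $\psi_{n+1}-\psi_n$. I would Taylor-expand the logarithm up to order $p-1$, reducing the problem to the convergence of oscillatory sums $\sum_n \alpha_n^a \bar\alpha_n^b e^{i(b-a)\Theta_n}$ with $a+b\le p-1$, $a\ne b$; the diagonal terms $|\alpha_n|^{2m}$ would be summable directly by condition (ii), and the odd-$p$ assumption would ensure that the Taylor remainder is $O(|\alpha_n|^p)$ and hence $\ell^1$. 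Substituting \eqref{1.17} into each oscillatory sum and iterating summation by parts produces multiple sums weighted by $c_{k_1}\cdots c_{k_j}\, h_j(\omega;\phi_{k_1},\ldots,\phi_{k_j})$ of exactly the form \eqref{1.9}, with $\omega=(b-a)\theta$. Under the natural small-divisor analogue of condition (iv) in Lemma~\ref{L1.3}, these sums converge uniformly in $n$, so $R_n$ is bounded and hence $\{\varphi_n(e^{i\theta})\}$ is bounded.

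To bound the dimension of the exceptional set, I would note that the small-divisor conditions can fail only when $\theta$ lies close to a value of the form $(\phi_{k_1}+\cdots+\phi_{k_j})/(b-a)$ with $|b-a|\le p-2$. Covering these resonant $\theta$'s by intervals whose diameters scale with the small-divisor threshold, summing the resulting series against $\sum |c_k|^\beta<\infty$, and using $\beta < 1/(p-2)$, I would show that any $s$-dimensional Hausdorff content with $s>\beta(p-2)$ can be made arbitrarily small. Hence the exceptional set $S$ has Hausdorff dimension at most $\beta(p-2)$. Off $S$, boundedness of $\{\varphi_n(e^{i\theta})\}$ combined with OPUC subordinacy theory then gives both $w(\theta)>0$ for Lebesgue-a.e.\ $\theta\in\partial\mathbb{D}\setminus S$ and $\mu_s(\partial\mathbb{D}\setminus S)=0$.

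The hard part will be the combinatorial and analytical bookkeeping inside the Pr\"ufer expansion: I must carefully identify which orders of the Taylor expansion are genuinely oscillatory (and hence produce small-divisor factors) versus diagonal (and hence handled directly by condition (ii)), and verify that the iterated summation by parts reproduces the recursive structure of the functions $h_j$ in \eqref{1.9}. This is also where the parity restriction on $p$ and the improvement from $p-1$ (as in the OPRL statement) to $p-2$ will come from: the OPUC Szeg\H{o} recursion has a diagonal $|\alpha_n|^2$ contribution that is automatically non-oscillatory and carries one fewer small divisor than the corresponding expansion in the OPRL case.
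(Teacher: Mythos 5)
Your overall architecture --- OPUC Pr\"ufer variables, an iterative expansion of $\log R_n$ driven by summation by parts, small-divisor conditions of the form \eqref{1.11}, a Hausdorff-dimension estimate for the exceptional set, and subordinacy theory --- is exactly the route the paper takes (Section~\ref{S6}, relying on the machinery of \cite{Lukic1}). So the strategy is right; but two points in your sketch are genuinely off, and they sit precisely at the step you yourself flag as the hard part.

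First, your explanation of the odd-$p$ phenomenon and of the exponent $p-2$ is not the correct mechanism. That the diagonal terms $\lvert\alpha_n\rvert^{2m}$ are non-oscillatory is true but is not what improves $p-1$ to $p-2$. The actual mechanism (Remark~\ref{R1.3}) is a parity/rotation-covariance constraint: after the iteration, the only non-removable singularities are first-harmonic ones, at $\eta=\phi$ with $\phi=\phi_{k_1}+\cdots+\phi_{k_s}-\phi_{l_1}-\cdots-\phi_{l_t}$ and $s-t=1$; the apparent higher-harmonic singularities at $k\eta=\phi$ with $k\ge 2$ are removable, exactly as Lemma~\ref{L2.2} shows in the continuum case. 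Hence the number of frequencies $j=s+t$ entering a genuine small divisor is always odd, and since $j\le p-1$ with $p$ odd, in fact $j\le p-2$; Lemma~\ref{L4.2} then gives dimension at most $\beta(p-2)$. Your resonance locus ``$\theta$ near $(\phi_{k_1}+\cdots+\phi_{k_j})/(b-a)$ with $\lvert b-a\rvert\le p-2$'' is therefore wrong on both counts: the genuine resonances carry no division by $b-a$ (those would be removable), and the operative constraint is on the total number of frequencies $j$, not on $\lvert b-a\rvert$. (Also, the Taylor remainder is $O(\lvert\alpha_n\rvert^p)$ for any $p$; parity plays no role there.) Second, your covering argument for the dimension bound only addresses first-order resonances; the functions $h_j$ in \eqref{1.9} have nested singularities --- products of $j$ factors $1/(\eta-\phi_{k_1}-\cdots-\phi_{k_i})$ --- and a naive covering by intervals around resonant points does not control the divergence set of $\sum\lvert c_{k_1}\cdots c_{k_j}h_j\rvert$. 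The paper handles this with uniformly $\beta$-H\"older measures, H\"older's inequality and the Catalan-number recursion (Lemma~\ref{L4.1} and Lemma~\ref{L4.2}); you would need that argument, or an equivalent, to close this step.
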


\begin{remark}\label{R1.3}
 Note that in the previous theorem, the only critical values of $p$ for the $\ell^p$ condition are $\emph{odd}$ integers. The same phenomenon was noticed for the finite frequency case in \cite{Lukic1}, and is in contrast with orthogonal polynomials on the real line and Schr\"odinger operators, where the statement changes at every integer value of $p$. There is an informal way to understand why this happens. For all systems, the method tells us that critical points are of the form
\begin{equation*}
\eta = \phi_{m_1} + \dots + \phi_{m_k} - ( \phi_{n_1} + \dots + \phi_{n_l} )
\end{equation*}
with $k+l < p$. However, only on the unit circle, we can rotate a measure; by rotating the measure by an angle $\psi$, we shift
\[
\eta \mapsto \eta + \psi, \quad \phi_m \mapsto \phi_m + \psi
\]
so only the critical points with $k-l=1$ are preserved. However, increasing $p$, new points with $k-l=1$, $k+l<p$ emerge only when $p$ exceeds an odd integer value.
\end{remark}

We prove Lemma~\ref{L1.3} in Sections~\ref{S2}--\ref{S3}. Sections~\ref{S4} and \ref{S5} contain proofs of Theorem~\ref{T1.1} and Corollary~\ref{C1.2}, respectively, and Section \ref{S6} describes the adaptations necessary to carry over the method to prove Theorems~\ref{T1.4} and \ref{T1.5}.

\section{Preliminaries}\label{S2}

To analyze solutions of \eqref{1.2}, we use Pr\"ufer variables, first introduced by Pr\"ufer \cite{Prufer26}. For
\begin{equation}
E=\frac{\eta^2}4
\end{equation}
with $\eta>0$ and for a real-valued nonzero solution $u(x)$ of \eqref{1.2},
we define modified Pr\"ufer variables $R(x)$, $\theta(x)$ by
\begin{align}
u'(x) & = \tfrac 12 \eta R(x) \cos (\tfrac 12 \eta x + \theta(x)) \label{2.2} \\
u(x) & = R(x) \sin (\tfrac 12 \eta x+ \theta(x)) \label{2.3}
\end{align}
From \eqref{1.2}, we obtain a system of first-order differential equations for $\log R$ and $\theta$,
\begin{align}
\frac{d\theta}{dx} & =  \frac{V(x)}{\eta} \bigl( \tfrac 12 e^{i[\eta x + 2\theta(x)]} + \tfrac 12 e^{-i[\eta x + 2\theta(x)]} - 1 \bigr)   \label{2.4} \\
\frac{d}{dx} \log R(x) & = \Im \left( \frac {V(x)}{\eta} e^{i[\eta x + 2\theta(x)]} \right) \label{2.5}
\end{align}

Note that, by \eqref{2.2} and \eqref{2.3}, boundedness of $R(x)$ implies boundedness of the corresponding solution of \eqref{1.2}. Thus, the goal becomes to analyze the integral of \eqref{2.5},
\begin{equation}\label{2.6}
\log R(b) - \log R(a) = \Im \int_a^b  \frac {V(x)}{\eta} e^{i[\eta x + 2\theta(x)]} dx.
\end{equation}
Note that we will, indeed, only estimate the imaginary part of the integral in \eqref{2.6}. The real part does not, in general, converge as $b\to \infty$.

Substituting \eqref{1.4} into \eqref{2.6}, our goal becomes to estimate integrals of the form
\begin{equation}\label{2.7}
 \int_a^b  e^{K i[\eta x + 2\theta(x)]} e^{-i(\phi_{m_1} + \dots+\phi_{m_J}) x} \gamma_{m_1}(x) \dots \gamma_{m_J}(x) dx.
\end{equation}
Initially, in \eqref{2.6}, these integrals appear with $K=J=1$, but later in the proof they appear with $J\ge 2$ and $0\le K \le J$.

Integrals of the form \eqref{2.7} can be estimated by the following lemma, which is just a more quantitative version of Lemma~4.1 from~\cite{Lukic5}. To avoid placing an absolute continuity assumption on $\gamma_k(x)$, the proof is expressed in terms of Fubini's theorem rather than integration by parts. Remember that by (i), the variations of the $\gamma_k$ are uniformly bounded,
\begin{equation}\label{2.8}
\tau = \sup_k \Var(\gamma_k,(0,\infty)) < \infty.
\end{equation}

\begin{lemma}\label{L2.1} Let $J, K \in\mathbb{Z}$ with $J\ge 1$ and $0\le K\le J$.  Let $0\le a < b < \infty$, and denote
 \begin{align*}
 \Gamma(x) & = \gamma_{m_1}(x) \dots \gamma_{m_J}(x), \\
\phi & = \phi_{m_1} + \dots + \phi_{m_J}.
 \end{align*}
Then
\begin{equation}\label{2.9}
\left\lvert \int_a^b \left(  (\phi - K \eta) e^{K i[\eta x + 2\theta(x)]} e^{-i\phi x} \Gamma(x) - 2K e^{K i[\eta x + 2\theta(x)]} e^{-i\phi x} \Gamma(x) \frac{d\theta}{dx} \right) dx \right\rvert \le 2  \tau^J.
\end{equation}
\end{lemma}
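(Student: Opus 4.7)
The plan is to rewrite the integrand as a total derivative times $\Gamma(x)$, then insert a Riemann--Stieltjes representation of each $\gamma_{m_j}$ and swap orders of integration by Fubini. Setting $g(x) = K[\eta x + 2\theta(x)] - \phi x$, one computes $g'(x) = K\eta + 2K\theta'(x) - \phi$, so the integrand of \eqref{2.9} equals
\[
-g'(x)\,e^{ig(x)}\,\Gamma(x) = i\,\Gamma(x)\,\frac{d}{dx}e^{ig(x)}.
\]
(The function $g$ is $C^1$ because $\theta$ is, by the ODE \eqref{2.4}.)

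Next, since $\gamma_k(\infty) = 0$ (combining conditions (i) and (ii) of Lemma~\ref{L1.3}, as noted right after Theorem~\ref{T1.1}), each factor admits the representation $\gamma_{m_j}(x) = -\int_{(x,\infty)} d\gamma_{m_j}(y_j)$ in the Stieltjes sense. Multiplying these out,
\[
\Gamma(x) = (-1)^J \int\cdots\int \mathbf{1}_{\min_j y_j > x}\, d\gamma_{m_1}(y_1)\cdots d\gamma_{m_J}(y_J),
\]
integrated against the product of signed Borel measures on $(0,\infty)^J$.

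Substituting this into the integral and applying Fubini's theorem (to the Jordan decompositions of the $d\gamma_{m_j}$), the inner $x$-integration, for fixed $(y_1,\dots,y_J)$ with $\min_j y_j > a$, telescopes to
\[
\int_a^{\min(b,\min_j y_j)} \frac{d}{dx} e^{ig(x)}\, dx = e^{ig(\min(b,\min_j y_j))} - e^{ig(a)},
\]
whose modulus is at most $2$ (and which vanishes when $\min_j y_j \le a$). Estimating the outer $J$-fold integral by total variations then gives
\[
\left\lvert\int_a^b (\cdots)\, dx\right\rvert \le 2\prod_{j=1}^J \lvert d\gamma_{m_j}\rvert\bigl((0,\infty)\bigr) \le 2\tau^J,
\]
as claimed.

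The only delicate point is the measure-theoretic bookkeeping: each $d\gamma_{m_j}$ is a signed Borel measure, so Fubini is applied via Jordan decomposition, and one must be consistent about endpoint conventions (half-open versus closed intervals). Neither of these affects the final bound, since only total variations enter. The reason to set things up this way---rather than integrating by parts---is precisely that no absolute continuity of the $\gamma_k$ needs to be assumed, as the paper already points out.
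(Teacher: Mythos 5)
Your proof is correct and follows essentially the same route as the paper's: both rewrite the integrand as $i\,\Gamma(x)\,\frac{d}{dx}e^{ig(x)}$ with a unimodular $e^{ig}$, represent each $\gamma_{m_j}$ as a tail integral of its Stieltjes measure, apply Fubini, and bound the resulting telescoped inner integral by $2$ and the outer $J$-fold integral by the product of total variations. The only cosmetic difference is that you use the Jordan decomposition of the signed measures $d\gamma_{m_j}$ where the paper writes $\gamma_k(x)=\int_{[x,\infty)}s_k\,d\nu_k$ with positive $\nu_k$ and $s_k=\pm1$, together with the opposite one-sided continuity convention; neither affects the bound.
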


\begin{proof}
Without loss of generality assume that $\gamma_k$ are left continuous. Then there exist finite positive measures $\nu_k$ on $\mathbb{R}$ and functions $s_k:\mathbb{R} \to \{-1,1\}$ such that $\gamma_k(x) = \int_{[x,\infty)} s_k d\nu_k$ and $\nu_k([x,\infty)) = \Var(\gamma_k,[x,\infty)) \le \tau$ by \eqref{2.8}. Using Fubini--Tonelli's theorem and then integrating in $x$, rewrite the integral on the left-hand side of \eqref{2.9} as
\begin{align*}
\int_a^b  \psi'(x) \Gamma(x) dx & = \int_{[a,\infty)^J} \int_a^{\min(t_1,\dots,t_J)} \psi'(x) s_{m_1}(t_1) \dotsm s_{m_J}(t_J) dx d\nu_{m_1}(t_1) \dotsm d\nu_{m_J}(t_J) \\
& = \int_{[a,\infty)^J} \bigl( \psi(\min(t_1,\dots,t_J)) - \psi(a)\bigr) s_{m_1}(t_1) \dotsm s_{m_J}(t_J)d\nu_{m_1}(t_1) \dotsm d\nu_{m_J}(t_J)
\end{align*}
where $\psi(x) = i e^{i(K\eta-\phi)x} e^{2i K \theta(x)}$. Since $\lvert \psi(x) \rvert = 1$, this implies
\[
\left\lvert \int_a^b  \psi'(x) \Gamma(x) dx \right\rvert \le 2 \int_{[a,\infty)^J}  \lvert s_{m_1}(t_1) \dotsm s_{m_J}(t_J) \rvert  d\nu_{m_1}(t_1) \dotsm d\nu_{m_J}(t_J)
\]
and integrating in $t_1,\dots, t_J$ implies \eqref{2.9}.
\end{proof}

We must keep track of integrals of the form \eqref{2.7} and the multiplicative constants with which they will appear in the method. We need to introduce quite a bit of notation, whose importance will become clear in Section~\ref{S3} (or see \cite{Lukic5} for more motivation). For instance, the integral \eqref{2.7} will appear multiplied by $f_{J,K}(\eta;\phi_{m_1},\dots,\phi_{m_J})$, with a function $f_{J,K}$ which we are about to define.

The functions $f_{J,K}$ and $g_{J,K}$ are introduced in \cite{Lukic5}, for $J,K\in \mathbb{Z}$ with $J\ge 1$ and $0\le K \le J$. For other pairs $(J,K) \in \mathbb{Z}^2$, we take those functions to be zero by convention. They are functions of $1+J$ variables, defined recursively by
\begin{equation}\label{2.10}
f_{1,0}(\eta;\phi_1) = - \frac 1\eta, \qquad f_{1,1}(\eta;\phi_1) = \frac 1\eta,
\end{equation}
and
\begin{align}
g_{J,K}(\eta;\{\phi_j\}_{j=1}^J) & = - \frac{ 2 K  }{K \eta - \sum_{j=1}^J \phi_j } f_{J,K}(\eta;\{\phi_j\}_{j=1}^J), \label{2.11} \\
f_{J,K}(\eta;\{\phi_j\}_{j=1}^J)  & = \frac 1\eta  \sum_{k=K-1}^{K+1} \sum_{\sigma\in S_J} \frac 1{J!} \omega_{K-k} g_{J-1,k}(\eta;\{\phi_{\sigma(j)}\}_{j=1}^{J-1}), \quad J \ge 2, \label{2.12}	
\end{align}
where $S_J$ denotes the symmetric group in $J$ elements and 
\begin{equation}\label{2.13}
\omega_a = \begin{cases} -1 & a =0 \\ \tfrac 1{2} & a = \pm 1 \\ 0 & \lvert a\rvert \ge 2 \end{cases}
\end{equation}
are constants which come from an alternative way of writing \eqref{2.4} as
\begin{equation}\label{2.14}
\frac{d\theta}{dx} =  \frac{V(x)}{\eta} \sum_{a=-1}^1 \omega_a  e^{ia[\eta x + 2\theta(x)]}.
\end{equation}
Notation can be simplified by the following symmetric product.

\begin{defn}
For a function $p_{I}$ of $1+I$ variables and a function $q_{J}$ of $1+J$ variables, their symmetric product is a function $p_{I} \odot q_{J}$ of $1+(I+J)$ variables defined by
\begin{equation*}
 (p_{I}\odot q_{J}) \bigl(\eta; \{\phi_i\}_{i=1}^{I+J} \bigr)  = \frac{1}{(I+J)!} \sum_{\sigma\in S_{I+J} } 
  p_{I} \bigl(\eta; \{\phi_{\sigma(i)}\}_{i=1}^{I} \bigr)  q_{J} \bigl(\eta; \{\phi_{\sigma(i)}\}_{i=I+1}^{I+J}\bigr). \end{equation*}
 \end{defn}

Further, it will be convenient to think of $\omega_a$, with $a\in\mathbb{Z}$, as a function of $1+1$ variables, with values given by \eqref{2.13}, and to introduce $\xi_{J,K}$, for $0\le K\le J$, as a function of $1+J$ variables,
\begin{align}
\xi_{J,K}(\eta;\{\phi_j\}_{j=1}^J) & = \begin{cases} \frac{(-1)^{K-1}}\eta & J=1 \\ 0 & J\ge 2\end{cases}
\end{align}
We can now rewrite \eqref{2.10}, \eqref{2.12} as
\begin{equation} \label{2.16}  
f_{J,K} = \xi_{J,K} + \frac 1\eta \sum_{a=-1}^1 \omega_a \odot g_{J-1,K+a}.
\end{equation}
It will also be useful to have notation for the corresponding functions with flipped signs of all but the first parameter,
\begin{align}
\breve f_{J,K}(\eta; \{\phi_j\}_{j=1}^J) & = f_{J,K}(\eta; \{-\phi_j\}_{j=1}^J), \\
\breve g_{J,K}(\eta; \{\phi_j\}_{j=1}^J) & = g_{J,K}(\eta; \{-\phi_j\}_{j=1}^J),
\end{align}
and for
\begin{equation}
 \mathcal G_{J,0} =  \sum_{j=1}^{J-1}  \sum_{k=1}^{\min\{j,J-j\}} \frac 1{4k} g_{j,k} \odot \breve g_{J-j,k}.
\end{equation}

We now point out some identities among the functions just defined. The importance of these identities is mostly in locating singularities of those functions, rather than in the precise form of the identities. For instance, \eqref{2.11} seems to indicate that $g_{J,K}$ has a singularity when $K\eta = \sum_{j=1}^J \phi_j$, but \eqref{2.21} below implies that many of those singularities are removable and that all non-removable singularities stem from $g_{j,1}$ for some $j\le J$, with $\eta = \sum_{i=1}^j \phi_{m_i}$.

\begin{lemma}\phantomsection \label{L2.2}
\begin{enumerate}[\rm (i)]
\item For $0 \le K \le J$ and $0<k<K$,
\begin{align}
f_{J,K} & = \tfrac 12 \sum_{j=0}^J  f_{j,k} \odot g_{J-j,K-k} \\
g_{J,K} & = \tfrac 12 \sum_{j=0}^J g_{j,k} \odot g_{J-j,K-k} \label{2.21}
\end{align}
\item For $J\ge 2$,
\begin{equation} \label{2.22}
f_{J,0}  - \breve f_{J,0} = (\phi_1+\dots + \phi_J )  \mathcal G_{J,0},
\end{equation}
assuming the parameters $\eta;\phi_1,\dots,\phi_J$ for both sides of the identity;

\item The functions $g_{J,1}$ are just rescaled and symmetrized $h_J$, namely,
\begin{equation}\label{2.23}
g_{J,1}(\eta; \{\phi_j\}_{j=1}^J) =  - \frac 2{\eta^J}  \frac 1{J!} \sum_{\sigma \in S_J} h_{J}(\eta; \{\phi_{\sigma(j)}\}_{j=1}^J).
\end{equation}
\end{enumerate}
\end{lemma}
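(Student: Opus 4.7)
The three identities are proved by simultaneous induction on $J$, using the recursive definitions \eqref{2.10}--\eqref{2.16} and the transcription \eqref{2.11} between $f$ and $g$. Base cases follow by direct substitution. Throughout, two observations recur: (a) $g_{J, 0} = 0$ for all $J \ge 1$ by \eqref{2.11} with $K = 0$, and (b) $\xi_{J, K} = 0$ when $J \ge 2$; together with the convention $g_{J, K} = 0$ for $K \notin \{0, \ldots, J\}$, these make most boundary terms in \eqref{2.16} vanish.

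For (i), substitute \eqref{2.16} on the LHS; since $0 < k < K$ forces $J \ge 2$, the $\xi$-term vanishes and $f_{J, K} = \frac{1}{\eta} \sum_{a=-1}^1 \omega_a \odot g_{J-1, K+a}$. Apply the inductive hypothesis of (i) to split each $g_{J-1, K+a}$ as $\frac{1}{2} \sum_{j} g_{j, k} \odot g_{J-1-j, K-k+a}$; the (suitably normalized) associativity of $\odot$ then lets me swap the inner and outer sums, and the factor involving $a$ assembles into $\xi_{j, k} + \frac{1}{\eta} \sum_a \omega_a \odot g_{j-1, k+a} = f_{j, k}$ via \eqref{2.16}, yielding the claim. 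The $g$-identity follows by multiplying both sides by $-\frac{2K}{K\eta - \sum \phi_j}$ and invoking \eqref{2.11} termwise.

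For (ii), observations (a) and (b) collapse \eqref{2.16} at $K = 0$, $J \ge 2$ to $f_{J, 0} = \frac{1}{\eta}\, \omega_1 \odot g_{J-1, 1}$, and analogously $\breve f_{J, 0} = \frac{1}{\eta}\, \omega_1 \odot \breve g_{J-1, 1}$, so
$$f_{J, 0} - \breve f_{J, 0} = \tfrac{1}{\eta}\, \omega_1 \odot (g_{J-1, 1} - \breve g_{J-1, 1}).$$
Applying \eqref{2.11} and combining over the common denominator $\eta^2 - (\sum \phi_j)^2$ decomposes the difference into a multiple of $f_{J-1, 1} - \breve f_{J-1, 1}$ (handled by the inductive hypothesis of (ii)) plus a piece with an explicit factor $\sum \phi_j$ multiplying $f_{J-1, 1} + \breve f_{J-1, 1}$. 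Expanding the latter via \eqref{2.12} and iterating (i) to rewrite higher-$K$ factors of $g$ as $\odot$-products of $g_{\cdot, k}$'s and $\breve g_{\cdot, k}$'s, the sum reorganizes into $(\sum \phi_j)\, \mathcal G_{J, 0}$. I expect this reassembly to be the main obstacle: the weights $\frac{1}{4k}$ and the range $1 \le k \le \min\{j, J - j\}$ in $\mathcal G_{J, 0}$ must emerge exactly from tracking how the prefactors $-\frac{2K}{K\eta - \sum \phi}$ in \eqref{2.11} group under symmetrization.

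For (iii), from \eqref{2.11} and \eqref{2.12} I obtain for $J \ge 2$ the recursion
$$g_{J, 1} = -\frac{2}{\eta - \sum \phi_j}\, \frac{1}{\eta}\left[\omega_0 \odot g_{J-1, 1} + \omega_1 \odot g_{J-1, 2}\right],$$
and part (i) gives $g_{J-1, 2} = \frac{1}{2}\sum_{j=0}^{J-1} g_{j, 1} \odot g_{J-1-j, 1}$, yielding a closed recursion in the $g_{\cdot, 1}$'s. Symmetrizing \eqref{1.9} over $S_J$ and substituting the inductive scaling $g_{j, 1} = -\frac{2}{\eta^j}\cdot \frac{1}{j!}\sum_\sigma h_j$ into each $h_j \cdot h_{J-1-j}$ produces the same recursion after an overall factor of $-2/\eta^J$; the prefactor $\frac{1}{\eta - \sum \phi_j}$ is already symmetric. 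The base case $g_{1, 1}(\eta; \phi_1) = -\frac{2}{\eta(\eta - \phi_1)} = -\frac{2}{\eta} h_1(\eta; \phi_1)$ starts the induction.
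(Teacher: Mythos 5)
Part (iii) of your plan coincides with the paper's argument (induction on $J$ via \eqref{2.11}, \eqref{2.16}, and the identity $g_{J-1,2}=\tfrac12\sum_j g_{j,1}\odot g_{J-1-j,1}$ from \eqref{2.21}), so nothing to add there. Part (i) is not proved in the paper at all --- it is quoted from \cite{Lukic5} --- so your inductive sketch is a legitimate substitute, but it has two loose ends you should not gloss over. First, the boundary term $a=-1$ in \eqref{2.16}: for $k=K-1$ the inductive hypothesis \eqref{2.21} with splitting parameter $k$ does not apply to $g_{J-1,K-1}$ (it would require $0<k<K-1$), so that term needs separate treatment. Second, \eqref{2.21} does not follow from the $f$-identity by ``invoking \eqref{2.11} termwise'': the prefactor attached to $g_{j,k}$ by \eqref{2.11} is $-2k/(k\eta-\Phi_1)$ with $\Phi_1$ the sum of only the first $j$ arguments, not $-2K/(K\eta-\Phi)$. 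The correct passage uses the partial-fraction identity $\frac{1}{(k\eta-\Phi_1)((K-k)\eta-\Phi_2)}=\frac{1}{K\eta-\Phi}\bigl(\frac{1}{k\eta-\Phi_1}+\frac{1}{(K-k)\eta-\Phi_2}\bigr)$, valid because $\Phi_1+\Phi_2=\Phi$, together with the $f$-identity for both splittings $(k,K-k)$ and $(K-k,k)$.

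The genuine gap is in part (ii), which is also where you diverge from the paper. Two of your intermediate claims fail. After writing $f_{J,0}-\breve f_{J,0}=\tfrac1\eta\,\omega_1\odot(g_{J-1,1}-\breve g_{J-1,1})$, the denominators inside $\omega_1\odot g_{J-1,1}$ are $\eta-\sum_{j=1}^{J-1}\phi_{\sigma(j)}$, involving only $J-1$ of the $J$ variables; there is no common denominator $\eta^2-(\phi_1+\dots+\phi_J)^2$, and nothing in that expansion makes the \emph{full} sum $\phi_1+\dots+\phi_J$ factor out --- which is precisely what \eqref{2.22} asserts. Moreover, the inductive hypothesis of (ii) concerns only $f_{j,0}-\breve f_{j,0}$, so the appeal to it for $f_{J-1,1}-\breve f_{J-1,1}$ is unavailable; you would have to strengthen the hypothesis to all pairs $f_{j,K}-\breve f_{j,K}$, and the reassembly you defer as ``the main obstacle'' is the entire content of the statement. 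The paper avoids induction here. By \eqref{2.11}, for each $j,k$ one has $-f_{j,k}\odot\breve g_{J-j,k}+g_{j,k}\odot\breve f_{J-j,k}=\frac{(k\eta-\Phi_1)-(k\eta+\Phi_2)}{2k}\,g_{j,k}\odot\breve g_{J-j,k}$: the $k\eta$'s cancel and, since $\Phi_1+\Phi_2=\phi_1+\dots+\phi_J$ for every permutation, the full sum appears at once with exactly the weight $\tfrac1{2k}$ that defines $\mathcal G_{J,0}$. Summing over $1\le j\le J-1$, $1\le k\le\min\{j,J-j\}$ and expanding $f_{j,k}$, $\breve f_{J-j,k}$ by \eqref{2.16}, the two triple sums cancel after reindexing and only the $\xi_{1,1}$-terms survive, which are $2f_{J,0}$ and $-2\breve f_{J,0}$ since $f_{J,0}=\tfrac12\xi_{1,1}\odot g_{J-1,1}$. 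I recommend replacing your inductive plan for (ii) by this telescoping identity.
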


\begin{proof}
(i) is a rescaled version of \cite[Lemma~5.1(i)]{Lukic5}.

(ii) Start from \eqref{2.11} to note
\[
 \frac 1{2k} (\phi_1+\dots + \phi_J ) g_{j,k} \odot \breve g_{J-j,k} = - f_{j,k} \odot \breve g_{J-j,k} + g_{j,k} \odot \breve f_{J-j,k}.
\]
Summing in $j$ and $k$ and using \eqref{2.16}, we have
\begin{align*}
2(\phi_1+\dots + \phi_J )  \mathcal G_{J,0}
 & = - \xi_{1,1} \odot \breve g_{J-1,1} - \frac 1\eta  \sum_{j=1}^{J-1} \sum_{k=1}^{\min\{j,J-j\}} \sum_{a=-1}^1 \omega_a \odot g_{j-1,k+a}  \odot \breve g_{J-j,k} \\
& \qquad  + \xi_{1,1} \odot g_{J-1,1}  + \frac 1\eta  \sum_{j=1}^{J-1} \sum_{k=1}^{\min\{j,J-j\}} \sum_{a=-1}^1 g_{j,k} \odot  \omega_a \odot \breve g_{J-j-1,k+a} 
\end{align*}
which implies \eqref{2.22} since the triple sums are equal (after a relabeling of indices) and $ f_{J,0} = \tfrac 1\eta  \omega_1 \odot g_{J-1,1} = \tfrac 12 \xi_{1,1} \odot g_{J-1,1}$.

(iii) We prove \eqref{2.23} by induction on $J$. Start by verifying
\[
g_{1,1}(\eta;\phi_1) = - \frac 2\eta \frac 1{\eta - \phi_1} = - \frac 2 \eta h_{1}(\eta;\phi_1).
\]
For $J\ge 2$, by \eqref{2.11}, \eqref{2.16} and \eqref{2.21}, we have
\begin{align*}
g_{J,1}(\eta; \{\phi_j\}_{j=1}^J) & = - \frac{2}{\eta - \sum_{j=1}^J \phi_j} f_{J,1} \\
 & = - \frac{2}{\eta - \sum_{j=1}^J \phi_j} \frac 1\eta  \left( \omega_0 \odot g_{J-1,1} + \tfrac 12 \omega_1 \odot \sum_{j=1}^{J-2} g_{j,1} \odot g_{J-j-1,1} \right).
\end{align*}
By the inductive hypothesis, this implies
\begin{align*}
g_{J,1}(\eta; \{\phi_j\}_{j=1}^J)  & = - \frac{2}{\eta - \sum_{j=1}^J \phi_j} \frac 1{\eta^J}  \left( - 2 \omega_0 \odot h_{J-1} + 2 \omega_1 \odot \sum_{j=1}^{J-2} h_{j} \odot h_{J-j-1} \right).
\end{align*}
Using \eqref{2.13} and $h_0=1$, the inductive step is completed.
\end{proof}

\section{Proof of Lemma~\ref{L1.3}}\label{S3}

In this section, we freely use all assumptions of Lemma~\ref{L1.3}. We break up its proof into several lemmas. Let us start by denoting
\begin{align}
\sigma(x) & = \sup_k \lvert \gamma_k(x)\rvert. \label{3.1}
\end{align}
By assumption (ii), $\sigma \in L^p$.

Denoting
\begin{equation}
\mathcal S_{J,K} (x) =  \sum_{m_1,\dots,m_J=1}^\infty f_{J,K}(\eta; \phi_{m_1},\dots,\phi_{m_J}) \beta_{m_1}(x) \dots \beta_{m_J}(x) e^{iK[\eta x + 2 \theta(x)]},
\end{equation}
where
\begin{equation}\label{3.3}
\beta_k(x) =  c_k e^{-i\phi_k x} \gamma_k(x),
\end{equation}
\eqref{2.6} becomes
\begin{equation}\label{3.4}
\log R(b) - \log R(a) = \Im \int_a^b \mathcal S_{1,1}(x) dx.
\end{equation}
The idea of the proof is to iteratively replace $\mathcal S_{1,1}$ by a sum of $\mathcal S_{J,K}$'s with ever higher values of $J$. We will have to keep track of the errors, so denote
\begin{equation}
E_{J,K} = \sum_{m_1,\dots,m_J=1}^\infty \left\lvert  c_{m_1} \dots c_{m_J}  g_{J,K}(\eta; \phi_{m_1},\dots,\phi_{m_J})\right\rvert
\end{equation}
(note that $E_{J,K}$ is trivially zero unless $1\le K \le J$, since the same is true of $g_{J,K}$) and
\begin{equation}
\mathcal E_{J,0} =  \sum_{m_1,\dots,m_J=1}^\infty \left\lvert  c_{m_1} \dots c_{m_J} \mathcal G_{J,0}(\eta; \phi_{m_1},\dots,\phi_{m_J})\right\rvert
\end{equation}
for $K=0$.

\begin{lemma}\label{L3.1}
$E_{J,K}$ is finite when $1\le K \le J\le p-1$ and $\mathcal E_{J,0}$ is finite for $2\le J \le p$.
\end{lemma}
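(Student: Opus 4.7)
The plan is to bound $E_{J,K}$ by induction on $K$ using the two identities in Lemma~\ref{L2.2}, and then deduce finiteness of $\mathcal E_{J,0}$ directly from the definition of $\mathcal G_{J,0}$. A preliminary observation I will use throughout is that the symmetric product $\odot$ decouples cleanly under sums of the type defining $E$ and $\mathcal E$: because the sums are over independent indices $m_1,\dots,m_J$, the averaging over $S_J$ inside $\odot$ is absorbed by relabeling, and the triangle inequality yields
\[
\sum_{m_1,\dots,m_J} \lvert c_{m_1}\dotsm c_{m_J}\rvert \, \bigl\lvert (p_j\odot q_{J-j})(\eta;\phi_{m_1},\dots,\phi_{m_J}) \bigr\rvert \le \Biggl( \sum_{m_1,\dots,m_j} \lvert c_{m_1}\dotsm c_{m_j}\, p_j \rvert \Biggr)\!\Biggl( \sum_{m_1,\dots,m_{J-j}} \lvert c_{m_1}\dotsm c_{m_{J-j}}\, q_{J-j}\rvert \Biggr)
\]
for any functions $p_j, q_{J-j}$ of the indicated numbers of variables.

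For the base case $K=1$, Lemma~\ref{L2.2}(iii) gives $\lvert g_{J,1}\rvert \le \frac{2}{\eta^J}\cdot\frac{1}{J!}\sum_{\sigma\in S_J}\lvert h_J\rvert$, and by permutation symmetry of the $m$-sum this collapses to
\[
E_{J,1} \le \frac{2}{\eta^J} \sum_{m_1,\dots,m_J} \bigl\lvert c_{m_1}\dotsm c_{m_J}\, h_J(\eta;\phi_{m_1},\dots,\phi_{m_J}) \bigr\rvert,
\]
which is finite for $1\le J\le p-1$ by assumption (iv). For $K\ge 2$, Lemma~\ref{L2.2}(i) with $k=1$ produces $g_{J,K} = \tfrac 12 \sum_{j=0}^J g_{j,1}\odot g_{J-j,K-1}$; feeding this into the factorization above yields $E_{J,K}\le \tfrac 12 \sum_{j=0}^J E_{j,1}\, E_{J-j,K-1}$, and since only $1\le j\le J-K+1$ contributes nontrivially, both index pairs lie in the previously handled range (using $J\le p-1$). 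An induction on $K$ then delivers $E_{J,K}<\infty$ whenever $1\le K\le J\le p-1$. For $\mathcal E_{J,0}$ with $2\le J\le p$, applying the same factorization to each term in the definition of $\mathcal G_{J,0}$ gives
\[
\mathcal E_{J,0} \le \sum_{j=1}^{J-1} \sum_{k=1}^{\min\{j,J-j\}} \frac{1}{4k}\, E_{j,k}\, \breve E_{J-j,k},
\]
where $\breve E_{J-j,k}$ is defined exactly like $E_{J-j,k}$ but with each $\phi_{m_i}$ replaced by $-\phi_{m_i}$.

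The only genuine obstacle is establishing the bound $\breve E_{J-j,k}<\infty$. I expect to handle this via the reality of $V$: the representation \eqref{1.4} can be organized so that the collection of triples $(c_k,\phi_k,\gamma_k)$ is invariant under the pairing $(c,\phi,\gamma)\leftrightarrow(\bar c,-\phi,\gamma)$, and reindexing along this pairing gives $\breve E_{J,K}=E_{J,K}$. Equivalently, the induction above can be rerun with $\phi\mapsto -\phi$ throughout, since Lemma~\ref{L2.2}(i) and (iii) are stable under this substitution and assumption (iv) at the same $\eta$ is preserved by the pairing symmetry. With this in hand, the displayed bound on $\mathcal E_{J,0}$ reduces to a finite double sum of products of finite $E_{j,k}$'s, and the lemma follows.
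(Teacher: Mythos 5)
Your proof is correct and follows essentially the same route as the paper: $E_{J,1}$ is finite by Lemma~\ref{L2.2}(iii) together with hypothesis (iv), the higher $E_{J,K}$ follow by induction through \eqref{2.21}, and $\mathcal E_{J,0}$ is bounded by a finite double sum of products of $E_{j,k}$'s via the definition of $\mathcal G_{J,0}$ and the factorization of $\odot$ under the absolute sums. Your one addition is the explicit treatment of $\breve E_{J-j,k}$ versus $E_{J-j,k}$; the paper's proof writes $E_{J-j,k}$ there without comment, implicitly relying on the same reality-of-$V$ symmetrization of the representation \eqref{1.4} that it invokes explicitly only in the proof of Lemma~\ref{L3.4}, so your extra care on this point is warranted and consistent with the paper's framework.
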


\begin{proof}
By \eqref{2.23}, since the condition \eqref{1.11} holds for $J=1,\dots,p-1$, $ E_{J,1}$ is finite for the same values of $J$. Now note that \eqref{2.21} implies
\begin{equation}
E_{J,K} \le \tfrac 12 \sum_{j=0}^{J} E_{j,k}  E_{J-j,K-k},
\end{equation}
and \eqref{2.22} implies
\begin{equation}
\mathcal E_{J,0}  \le   \sum_{j=1}^{J-1}  \sum_{k=1}^{\min\{j,J-j\}}  \tfrac 1{4k} E_{j,k} E_{J-j,k},
\end{equation}
and the lemma follows from these two identities.
\end{proof}

\begin{lemma}\label{L3.2}
The sum $\mathcal S_{J,K}(x)$ is absolutely convergent when $0\le K \le J \le p$, and if in addition $J\ge 2$, then
\begin{equation}\label{3.9}
\sum_{m_1,\dots,m_J=1}^\infty \left\lvert f_{J,K}(\eta; \phi_{m_1},\dots,\phi_{m_J}) \beta_{m_1}(x) \dots \beta_{m_J}(x) \right\rvert 
\le  \tfrac 1\eta  \sum_{a=-1}^1 \lvert \omega_a \rvert E_{J-1,K+a} \sum_{l=1}^\infty \lvert c_l\rvert \sigma(x)^J.
\end{equation}
\end{lemma}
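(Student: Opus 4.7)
The plan is to establish both absolute convergence and the bound \eqref{3.9} by unfolding the recursion \eqref{2.16}, which for $J\ge 2$ (where $\xi_{J,K}=0$) reduces to
\[
f_{J,K} = \frac 1\eta \sum_{a=-1}^1 \omega_a \odot g_{J-1,K+a}.
\]
First I would dispatch the low-$J$ cases separately. For $J=0$ there is nothing to sum. For $J=1$, the explicit formulas $f_{1,0}=-1/\eta$, $f_{1,1}=1/\eta$ from \eqref{2.10} and the bound $|\beta_m(x)|\le |c_m|\sigma(x)$ give
\[
\sum_m |f_{1,K}(\eta;\phi_m)\beta_m(x)| \le \frac{\sigma(x)}{\eta}\sum_{l=1}^\infty |c_l|,
\]
which is finite for a.e.\ $x$ by hypotheses (ii) and (iii).

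The main work is for $J\ge 2$. I would substitute the recursion into the left side of \eqref{3.9}, apply the triangle inequality on the finite sum in $a$, and unfold the symmetric product. The key observation is that $\omega_a$ does not depend on its phase argument, so for each $a$ the expansion of $\omega_a\odot g_{J-1,K+a}$ over $S_J$ simply averages $g_{J-1,K+a}$ over the $J$ choices of which phase variable to omit. After relabeling summation indices (which the full sum $\sum_{m_1,\dots,m_J}$ makes possible), each of the $J!$ permutation contributions produces the identical expression
\[
|\omega_a| \sum_{n_1,\dots,n_J=1}^\infty |g_{J-1,K+a}(\eta;\phi_{n_2},\dots,\phi_{n_J})| \prod_{j=1}^J |\beta_{n_j}(x)|,
\]
so the $1/J!$ cancels. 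This sum then factorizes: the sum in $n_1$ alone yields $\sum_l|\beta_l(x)|\le \sigma(x)\sum_l|c_l|$, while the remaining $J-1$ sums, upon bounding $|\beta_{n_j}(x)|\le |c_{n_j}|\sigma(x)$, give exactly $\sigma(x)^{J-1} E_{J-1,K+a}$. Assembling the three values of $a$ produces \eqref{3.9}.

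Finally, absolute convergence for $0\le K\le J\le p$ follows from the bound just obtained together with Lemma~\ref{L3.1}. Indeed, $E_{J-1,K+a}$ is finite whenever $1\le K+a\le J-1\le p-1$, which covers all $J\le p$; for $K+a=0$ the quantity $g_{J-1,0}$ vanishes by \eqref{2.11}, and for $K+a$ outside $[0,J-1]$ it is zero by the convention in effect. Thus every term on the right-hand side of \eqref{3.9} is finite for a.e.\ $x$, proving the absolute convergence of $\mathcal S_{J,K}(x)$. The only delicate point I anticipate is the combinatorial bookkeeping of the symmetric product --- specifically, recognizing that exactly one of the $J$ summation indices becomes the ``free'' index carrying the factor $\sum_l|c_l|$ while the remaining $J-1$ feed into $E_{J-1,K+a}$. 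Once this factorization is seen, the bound falls out directly without requiring any deeper structural identity.
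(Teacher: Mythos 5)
Your proposal is correct and follows essentially the same route as the paper: both start from the recursion \eqref{2.16}, bound $\lvert\beta_{m_j}(x)\rvert$ by $\lvert c_{m_j}\rvert\sigma(x)$, and sum over $m_1,\dots,m_J$; you merely make explicit the combinatorial step (the relabeling over $S_J$ that cancels the $1/J!$ and factors out $\sum_l\lvert c_l\rvert$ times $E_{J-1,K+a}$) which the paper compresses into ``summing in $m_1,\dots,m_J$ completes the proof.''
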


\begin{proof}
\eqref{2.16} implies
\[
\lvert f_{J,K} \rvert \le  \lvert \xi_{J,K} \rvert + \tfrac 1\eta   \sum_{a=-1}^1 \lvert \omega_a \rvert \odot \lvert g_{J-1,K+a} \rvert.
\]
Multiplying by
\[
\lvert \beta_{m_1}(x) \dots \beta_{m_J}(x) \rvert \le \lvert c_{m_1} \dots c_{m_J}\rvert \sigma(x)^J
\]
(which follows from \eqref{3.1}) and summing in $m_1,\dots,m_J$ completes the proof.
\end{proof}

\begin{lemma}\label{L3.3}
For $J=1,\dots,p-1$,
\begin{equation}\label{3.10}
\left\lvert  \int_a^b \left( \sum_{K=1}^{J} \mathcal S_{J,K} - \sum_{K=0}^{J+1} \mathcal S_{J+1,K} \right) dx \right\rvert \le \sum_{K=1}^{J}  \tfrac 1K E_{J,K} \tau^J.
\end{equation}
\end{lemma}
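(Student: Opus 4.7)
The strategy is to apply Lemma~\ref{L2.1} termwise and recombine. For each $K\in\{1,\dots,J\}$ and each tuple $(m_1,\dots,m_J)$, apply Lemma~\ref{L2.1} with $\Gamma(x)=\gamma_{m_1}(x)\cdots\gamma_{m_J}(x)$ and $\phi=\phi_{m_1}+\dots+\phi_{m_J}$. Setting $A(x)=e^{iK[\eta x+2\theta(x)]}e^{-i\phi x}\Gamma(x)$ and using the identity $(\phi-K\eta)g_{J,K}=2Kf_{J,K}$ (just \eqref{2.11} cleared of denominators), I multiply the bound from Lemma~\ref{L2.1} by $c_{m_1}\cdots c_{m_J}\,g_{J,K}(\eta;\phi_{m_1},\dots,\phi_{m_J})/(2K)$ to obtain
\[
\Bigl\lvert \int_a^b \bigl[ f_{J,K}\, c_{m_1}\cdots c_{m_J}\, A(x) - g_{J,K}\, c_{m_1}\cdots c_{m_J}\, A(x)\,\theta'(x) \bigr]\,dx \Bigr\rvert \le \frac{\tau^J}{K}\bigl\lvert g_{J,K}\, c_{m_1}\cdots c_{m_J}\bigr\rvert.
\]
A key feature of this step is that the factor $\phi-K\eta$ is absorbed into $f_{J,K}$ (which vanishes wherever $\phi=K\eta$), so there is no division by a possibly vanishing quantity; the rearrangement does require $K\ge 1$.

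Then I sum over $(m_1,\dots,m_J)$ by the triangle inequality, invoking absolute convergence of the LHS from Lemma~\ref{L3.2} and finiteness of the emerging sum $E_{J,K}$ from Lemma~\ref{L3.1}. Introducing $\mathcal T_{J,K}(x)$ as the analog of $\mathcal S_{J,K}(x)$ with $g_{J,K}$ replacing $f_{J,K}$, this reads
\[
\Bigl\lvert \int_a^b \bigl[ \mathcal S_{J,K}(x) - \mathcal T_{J,K}(x)\,\theta'(x) \bigr]\,dx \Bigr\rvert \le \frac{\tau^J}{K}E_{J,K}.
\]
Summing over $K=1,\dots,J$ produces the RHS of \eqref{3.10}, and it remains to establish the identity $\sum_{K=1}^{J}\mathcal T_{J,K}(x)\theta'(x) = \sum_{K'=0}^{J+1}\mathcal S_{J+1,K'}(x)$.

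For this identity, I expand $\theta'(x)=\eta^{-1}\sum_l\beta_l(x)\sum_{a=-1}^{1}\omega_a\,e^{ia[\eta x+2\theta(x)]}$ using \eqref{2.14} and $V=\sum_l\beta_l$, then relabel $l=m_{J+1}$ and set $K'=K+a$. The resulting $(J{+}1)$-fold sum carries weight $\eta^{-1}\omega_a\,g_{J,K'-a}(\eta;\phi_{m_1},\dots,\phi_{m_J})$ and exponential factor $e^{iK'[\eta x+2\theta(x)]}$. Since $\beta_{m_1}\cdots\beta_{m_{J+1}}$ is symmetric in the indices $m_i$, I may symmetrize $g_{J,K'-a}(\eta;\phi_{m_1},\dots,\phi_{m_J})$ over which of the $J{+}1$ indices is omitted, producing the symmetric product $\omega_a\odot g_{J,K'-a}$; the evenness $\omega_{-a}=\omega_a$ then rewrites this as $\omega_a\odot g_{J,K'+a}$. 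By \eqref{2.16} (where $\xi_{J+1,K'}=0$ since $J+1\ge 2$), summing over $a$ reconstructs $\eta f_{J+1,K'}$, so the inner $m$-sum is precisely $\mathcal S_{J+1,K'}(x)$. The range $K'\in\{0,1,\dots,J+1\}$ arises from $K\in\{1,\dots,J\}$ and $a\in\{-1,0,1\}$; at the boundaries $K'=0,J+1$ the convention $g_{J,k}\equiv 0$ for $k\notin\{1,\dots,J\}$ correctly truncates the $a$-sum.

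The main technical obstacle is this final combinatorial identification: converting the ordered $(J{+}1)$-fold sum into the symmetric products defining $f_{J+1,K'}$ in \eqref{2.16}. The calculation is essentially bookkeeping, but it requires careful handling of the index ranges (especially at the boundaries $K'=0$ and $K'=J+1$), the symmetry of the $m$-sum, and the evenness of $\omega$.
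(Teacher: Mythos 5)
Your proposal is correct and follows the same route as the paper: apply Lemma~\ref{L2.1}, multiply \eqref{2.9} by $\tfrac{1}{2K}g_{J,K}c_{m_1}\cdots c_{m_J}$ using \eqref{2.11}, sum over indices and over $K$ (justified by Lemmas~\ref{L3.1} and \ref{L3.2}), and identify the $g_{J,K}\,\theta'$ terms with $\sum_{K'=0}^{J+1}\mathcal S_{J+1,K'}$ via \eqref{1.4}, \eqref{2.14} and \eqref{2.12}/\eqref{2.16}. The paper states this last identification in one sentence; your detailed bookkeeping of the relabeling, symmetrization, evenness of $\omega$, and boundary cases $K'=0,J+1$ is a correct expansion of that step.
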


\begin{proof}
For $K\ge 1$, use Lemma~\ref{L2.1} and multiply \eqref{2.9} by $\frac 1{2K} g_{J,K}(\eta;\phi_{m_1},\dots,\phi_{m_J})$ to conclude
\[
\left\lvert \int_a^b \left(  f_{J,K} e^{K i[\eta x + 2\theta(x)]} e^{-i\phi x} \Gamma(x) - g_{J,K} e^{K i[\eta x + 2\theta(x)]} e^{-i\phi x} \Gamma(x) \frac{d\theta}{dx} \right) dx \right\rvert \le \frac 1K \lvert g_{J,K} \rvert \tau^J
\]
where we have used \eqref{2.11} and the notation in Lemma~\ref{L2.1}.  Multiply by $c_{m_1} \dots c_{m_J}$, sum in $m_1, \dots, m_J$ from $1$ to $\infty$, and sum in $K$ from $1$ to $J$ to conclude \eqref{3.10}. The sum containing the $g_{J,K}$ turns into the sum of $\mathcal S_{J+1,K}$ by using \eqref{1.4}, \eqref{2.14} and \eqref{2.12}.

The infinite summation is justified by Fubini's theorem, by Lemmas~\ref{L3.1} and \ref{L3.2}.
\end{proof}

\begin{lemma}\label{L3.4} For $J=2,\dots,p$,
\begin{equation}\label{3.11}
\left\lvert \Im  \int_a^b \mathcal S_{J,0}(x) dx \right\rvert \le \mathcal E_{J,0} \tau^J.
\end{equation}
\end{lemma}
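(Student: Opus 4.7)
The plan is to compute $2i\,\Im\mathcal{S}_{J,0}(x)$ explicitly by exploiting the identity \eqref{2.22}, and then apply Lemma~\ref{L2.1} at $K=0$ to bound each term of the resulting sum.

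First I would write $2i\,\Im\mathcal{S}_{J,0}(x) = \mathcal{S}_{J,0}(x) - \overline{\mathcal{S}_{J,0}(x)}$. Because $V$ is real, the expansion \eqref{1.4} may be symmetrized so that its summands are invariant under the involution $k\mapsto k^*$ with $(c_{k^*},\phi_{k^*},\gamma_{k^*})=(\overline{c_k},-\phi_k,\overline{\gamma_k})$; this pairs each $\beta_k(x)$ with $\beta_{k^*}(x)=\overline{\beta_k(x)}$. Using that $f_{J,0}$ takes real values on real arguments and reindexing the $J$-fold summation via $m_j\mapsto m_j^*$, one obtains
\[
\overline{\mathcal{S}_{J,0}(x)} = \sum_{m_1,\dots,m_J}f_{J,0}(\eta;\{-\phi_{m_j}\})\,\beta_{m_1}(x)\cdots\beta_{m_J}(x) = \sum_{m_1,\dots,m_J}\breve f_{J,0}(\eta;\{\phi_{m_j}\})\,\beta_{m_1}(x)\cdots\beta_{m_J}(x).
\]
Substituting the identity \eqref{2.22} therefore yields the pointwise expression
\[
2i\,\Im\mathcal{S}_{J,0}(x) = \sum_{m_1,\dots,m_J} c_{m_1}\cdots c_{m_J}\Bigl(\sum_{j=1}^J \phi_{m_j}\Bigr)\mathcal{G}_{J,0}(\eta;\{\phi_{m_j}\})\,e^{-i(\sum_j\phi_{m_j})x}\gamma_{m_1}(x)\cdots\gamma_{m_J}(x).
\]

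Next I would integrate over $[a,b]$ and interchange summation and integration, which is legitimate by Lemma~\ref{L3.1} (giving $\mathcal{E}_{J,0}<\infty$ for $2\le J\le p$). Each resulting inner integral is precisely of the form handled by Lemma~\ref{L2.1} at $K=0$, and is therefore bounded by $2\tau^J$. Summing termwise and dividing by $2$ gives
\[
\Bigl|\Im\int_a^b\mathcal{S}_{J,0}(x)\,dx\Bigr| \le \tau^J\sum_{m_1,\dots,m_J}\bigl|c_{m_1}\cdots c_{m_J}\mathcal{G}_{J,0}(\eta;\{\phi_{m_j}\})\bigr| = \mathcal{E}_{J,0}\tau^J,
\]
as desired.

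The only delicate step is the conjugate-reindexing argument in the first display, which requires passing to the symmetric form of the representation \eqref{1.4}; once $\overline{\mathcal{S}_{J,0}}$ has been matched with the $\breve f_{J,0}$-sum, the remainder of the proof is a direct combination of \eqref{2.22} with the $K=0$ case of Lemma~\ref{L2.1} — and this is the only point in the paper where that specialization of the lemma is used.
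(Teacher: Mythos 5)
Your proposal is correct and follows essentially the same route as the paper: symmetrize the representation of $V$ so that conjugation acts as a reindexing, use the realness of $f_{J,0}$ and the identity \eqref{2.22} to rewrite $2i\,\Im\mathcal S_{J,0}$ as a sum over $(\phi_{m_1}+\dots+\phi_{m_J})\mathcal G_{J,0}\,e^{-i\phi x}\Gamma(x)$, and then apply the $K=0$ case of Lemma~\ref{L2.1} termwise. The paper phrases this as pairing and averaging each term with its sign-flipped conjugate partner rather than conjugating the whole sum, but the two computations are identical.
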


\begin{proof}
Without loss of generality, we can assume that for each term \eqref{3.3} in the sum \eqref{1.4}, the sum also contains a term $\bar c_k e^{i\phi_k x} \bar\gamma_k(x)$; we can fulfill this assumption by taking the representation \eqref{1.4} and averaging it with its complex conjugate, since $V(x)$ is real-valued. Then, note that for every term
\[
 f_{J,0}(\eta; \phi_{m_1},\dots,\phi_{m_J}) \beta_{m_1}(x) \dots \beta_{m_J}(x) 
\]
in  $\mathcal S_{J,0}$, there is another term with opposite signs of the $\phi_{m_j}$,
\[
f_{J,0}(\eta; -\phi_{m_1},\dots,-\phi_{m_J}) \bar\beta_{m_1}(x) \dots \bar\beta_{m_J}(x).
\]
Averaging those two terms and using Lemma~\ref{L2.2}(ii) and Lemma~\ref{L2.1}, we can estimate
\begin{align*}
& \;\quad \tfrac 12 \left\lvert  \Im \int_a^b \left(  f_{J,0} \beta_{m_1}(x) \dots \beta_{m_J}(x) + \breve f_{J,0} \bar\beta_{m_1}(x) \dots \bar\beta_{m_J}(x) \right) dx   \right\rvert  \\
& = \tfrac 12 \left\lvert \Im \int_a^b  \left(  (f_{J,0} - \breve f_{J,0} ) \beta_{m_1}(x) \dots \beta_{m_J}(x)  \right) dx \right\rvert \\
& \le  \lvert  \mathcal G_{J,0}(\eta; \phi_{m_1},\dots,\phi_{m_J}) \rvert  \tau^J.
\end{align*}
Summing in $m_1,\dots, m_J$ implies \eqref{3.11}.
\end{proof}

\begin{proof}[Proof of Lemma~\ref{L1.3}]

Summing \eqref{3.10} in $J=1,\dots,p-1$, we obtain
\begin{equation}\label{3.12}
\left\lvert \int_a^b \left( \mathcal S_{1,1}(x) - \sum_{K=1}^p \mathcal S_{p,K}(x)  - \sum_{j=2}^{p} \mathcal S_{j,0}(x) \right) dx  \right\rvert \le \sum_{j=1}^{p-1} \sum_{k=1}^j \frac 1k E_{j,k} \tau^j.
\end{equation}
Meanwhile, using Lemma~\ref{L3.2} for $J=p$, integrating in $x$ and summing in $K$,
\begin{equation}\label{3.13}
\left\lvert \sum_{K=1}^p \int_a^b \mathcal S_{p,K}(x) dx \right\rvert \le \frac 2\eta   \sum_{k=0}^{p-1} E_{p-1,k} \sum_{l=1}^\infty \lvert c_l\rvert \int_a^b \sigma(x)^p dx.
\end{equation}

Taking the imaginary part of \eqref{3.12} and using Lemma~\ref{L3.4} and \eqref{3.13}, we conclude (remembering \eqref{3.4}) that
\begin{equation}\label{3.14}
\lvert \log R(b) - \log R(a) \rvert \le \sum_{j=1}^{p-1} \sum_{k=1}^j \frac 1k E_{j,k} \tau^j + \sum_{J=2}^p \mathcal E_{J,0} \tau^J +  \frac 2\eta \sum_{k=0}^p E_{p-1,k} \sum_{l=1}^\infty \lvert c_l\rvert \int_a^\infty \sigma(x)^p dx.
\end{equation}
All we need from this inequality is that it is an estimate independent of $b$. Thus, $\log R(b)$ is a bounded function as $b\to \infty$, which shows that $u(x)$ is a bounded function.

We presented the proof in this way for (relative) clarity. If we had, instead of using $\tau$, written the estimate in Lemma~\ref{L2.1} in terms of variations of the $\gamma_k$, and used that throughout the method, \eqref{3.14} would be an inequality with a right-hand side that decays to $0$ as $a\to \infty$. This would automatically imply that $\log R(x)$ is Cauchy as $x\to\infty$, and that a finite limit
\[
\lim_{x\to\infty} \log R(x)
\]
exists.
\end{proof}

\section{Proof of Theorem~\ref{T1.1}}\label{S4}

To prove Theorem~\ref{T1.1} starting from Lemma~\ref{L1.3}, we need to estimate the Hausdorff dimension of the set of $\eta$ for which the small divisor condition \eqref{1.11} fails for some $j < p$.  For instance, for $p=2$ we need to estimate the dimension of the set of $\eta$ where
\[
\sum_{l=1}^\infty \left\lvert \frac {c_l}{\eta-\phi_l} \right\rvert =\infty;
\]
for $p=3$ we also need to estimate the dimension of the set where
\[
\sum_{k,l=1}^\infty \left\lvert \frac {c_k c_l}{(\eta-\phi_k)(\eta-\phi_k -\phi_l)} \right\rvert =\infty;
\]
etc. We will use measures with the following property: for $\beta \in [0,1]$, a Borel measure $\nu$ on $\mathbb{R}$ is uniformly $\beta$-H\"older continuous (or U$\beta$H) if there exists $\tilde C<\infty$ such that for every interval $I\subset\mathbb{R}$ with $\lvert I \rvert <1$,
\begin{equation}\label{4.1}
\nu(I) \le \tilde C \lvert I \rvert^\beta,
\end{equation}
where $\lvert \cdot \rvert$ denotes Lebesgue measure. If $\nu$ is finite, the condition $\lvert I \rvert <1$ can be removed (while possibly changing $\tilde C$). The condition \eqref{4.1} enters the proof through the following lemma.

\begin{lemma}\label{L4.1} Let $\nu$ be a finite U$\beta$H measure on $\mathbb{R}$. 
\begin{enumerate}[(i)] 
\item If $\alpha \in (0,\beta)$, then for all $\psi \in\mathbb R$,
\begin{equation}
\int \frac 1{\left\lvert \eta-\psi \right\rvert^{\alpha}} d\nu(\eta) \le D_{\alpha},
\end{equation}
where $D_{\alpha}$ is a finite constant which depends only on $\alpha$ and not on $\psi$;
\item For $J\ge 1$ and $\alpha \in (0,\frac \beta J)$,
\begin{equation}
\int  \left\lvert  h_{J}(\eta;\phi_{1},\dots,\phi_{J}) \right\rvert^\alpha d\nu(\eta)  \le   C_{J} D_{J \alpha},
\end{equation}
where $C_J = \frac 1{J+1} \binom{2J}{J}$ are Catalan numbers.
\end{enumerate}
\end{lemma}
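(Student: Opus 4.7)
For part (i), the plan is to invoke the layer-cake representation
\[
\int \frac{1}{|\eta - \psi|^\alpha} \, d\nu(\eta) = \int_0^\infty \nu\bigl(\{\eta : |\eta - \psi| < t^{-1/\alpha}\}\bigr) \, dt
\]
and split the integral at $t = 1$. On $[1,\infty)$ the ball has radius at most $1$, so the U$\beta$H property yields $\nu(\{|\eta-\psi|<t^{-1/\alpha}\}) \le \tilde C \, 2^\beta t^{-\beta/\alpha}$, which is integrable on $[1,\infty)$ precisely because $\alpha < \beta$; on $[0,1]$ the finiteness of $\nu$ gives the trivial bound $\nu(\mathbb{R})$. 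Both estimates are independent of $\psi$, producing the uniform constant $D_\alpha$.

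For part (ii), the strategy is to unfold the recursion \eqref{1.9} completely down to $h_0 = 1$. Each recursion step extracts one denominator of the form $(\eta - \text{partial sum of }\phi)^{-1}$ and replaces one $h$ by a sum of products of two smaller $h$'s, so iterating $J$ levels expresses $h_J(\eta;\phi_1,\dots,\phi_J)$ as a sum of monomials, each being a product of exactly $J$ factors $(\eta - \psi_k)^{-1}$ with $\psi_k$ some subsum of $\phi_1,\dots,\phi_J$. The number $N_J$ of such monomials obeys the Catalan recursion
\[
N_J = \sum_{j=0}^{J-1} N_j N_{J-1-j}, \qquad N_0 = 1,
\]
so $N_J = C_J$.

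Since $\alpha < \beta/J \le 1$, the subadditivity $|a+b|^\alpha \le |a|^\alpha + |b|^\alpha$ lets me distribute the $\alpha$-power over the sum, reducing matters to bounding
\[
\int \prod_{k=1}^J |\eta - \psi_k|^{-\alpha} \, d\nu(\eta) \le \prod_{k=1}^J \left( \int |\eta - \psi_k|^{-\alpha J} \, d\nu \right)^{1/J}
\]
for each of the $C_J$ monomials, via generalized H\"older with equal exponents $J$. Since $\alpha J < \beta$, part (i) bounds each factor on the right by $D_{\alpha J}$ uniformly in $\psi_k$, so each monomial integral is at most $D_{\alpha J}$; summing the $C_J$ contributions yields $C_J D_{\alpha J}$, as required.

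The step demanding the most care is the combinatorial accounting of the unfolded recursion, namely verifying that iterating \eqref{1.9} produces exactly $C_J$ monomials and that each has precisely $J$ denominators of the claimed form. Once this Catalan structure is established, H\"older's inequality and part (i) combine mechanically.
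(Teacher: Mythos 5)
Your proof is correct. Part (i) coincides with the paper's argument: the same layer-cake representation and the same U$\beta$H estimate of the superlevel sets, with the split point fixed at $t=1$ rather than an arbitrary $\epsilon$; like the paper, you tacitly use the remark that for finite $\nu$ the restriction $\lvert I\rvert<1$ in \eqref{4.1} can be dropped, since the interval $\{ \lvert\eta-\psi\rvert<t^{-1/\alpha}\}$ can have length up to $2$ on your range of $t$. For part (ii) you take a genuinely different, though closely related, route. The paper runs an induction on $J$, applying at each single step of \eqref{1.9} a three-factor H\"older inequality with exponents $J$, $J/j$ and $J/(J-j-1)$, and closes the induction with the Catalan recursion $C_J=\sum_{j=0}^{J-1}C_jC_{J-j-1}$. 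You instead unfold \eqref{1.9} completely, exhibiting $h_J$ as a sum of $C_J$ monomials (counted with multiplicity --- they need not be distinct, e.g.\ $h_2 = 2(\eta-\phi_1-\phi_2)^{-1}(\eta-\phi_1)^{-1}$, which is harmless), each a product of exactly $J$ simple poles at subsums of the $\phi_k$; then subadditivity of $t\mapsto t^\alpha$ (valid since $\alpha<\beta/J\le 1$) and one equal-exponent H\"older application per monomial, combined with the uniformity in $\psi$ from part (i), give the same constant $C_J D_{J\alpha}$. Both arguments rest on the same three ingredients --- part (i), H\"older, and the Catalan combinatorics of \eqref{1.9}; yours makes the pole structure of $h_J$ explicit and applies H\"older only once per term, at the cost of the combinatorial bookkeeping you rightly identify as the delicate step, while the paper's induction avoids any global unfolding by working one recursion level at a time.
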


\begin{proof} 
(i) By Fubini's theorem, and picking an arbitrary $\epsilon \in (0,\infty)$,
\begin{align*}
\int \frac 1{\lvert \eta-\psi \rvert^\alpha} d\nu(\eta) & = \int_0^\infty \nu\left(\left\{\eta:  \frac 1{\lvert \eta - \psi \rvert^\alpha} > t \right\}\right) dt \\
& \le \epsilon \nu(\mathbb{R}) + \int_\epsilon^\infty  \tilde C (2 t^{-1/\alpha})^\beta dt \\
& \le  \epsilon \nu(\mathbb{R}) + \tilde C \frac {2^\beta}{\frac \beta\alpha -1} \epsilon^{1-\beta/\alpha}
\end{align*}
which is a bound independent on $\psi$, concluding the proof.

(ii) The proof proceeds by induction. For $J=0$ the statement is trivial.

Assume the statement is true for all $j<J$. Integrating one term of the sum on the right-hand side of \eqref{1.9} and using H\"older's inequality and the inductive hypothesis, we get
\begin{align*}
\int \left\lvert \frac 1{\eta-\phi_1 - \dots - \phi_J} h_j h_{J-j-1} \right\rvert^\alpha d\nu(\eta)  & \le D_{J\alpha}^{1/J} (C_j D_{J\alpha})^{j/J} (C_{J-j-1} D_{J\alpha})^{(J-j-1)/J} \\
& \le C_j C_{J-j-1} D_{J\alpha}.
\end{align*}
Summing in $j$, using \eqref{1.9}, and remembering that Catalan numbers obey the recursion relation
\[
C_J = \sum_{j=0}^{J-1} C_j C_{J-j-1},
\]
we complete the inductive step.
\end{proof}

\begin{lemma}\label{L4.2}
Assume that \eqref{1.7} holds. Then, for a positive integer $j$, the set of $\eta$ for which the condition \eqref{1.11} fails has Hausdorff dimension at most $j\alpha$.
\end{lemma}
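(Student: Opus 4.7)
The plan is a standard Frostman-type argument: to bound the Hausdorff dimension of the bad set
\[
B_j = \left\{\eta\in(0,\infty) \;\Big|\; \sum_{k_1,\dots,k_j=1}^\infty \lvert c_{k_1}\dotsm c_{k_j} h_j(\eta;\phi_{k_1},\dots,\phi_{k_j})\rvert = \infty\right\}
\]
by $j\alpha$, it is enough, by Frostman's lemma, to show that for every $\beta \in (j\alpha,1]$ and every finite U$\beta$H measure $\nu$ on $\mathbb{R}$, we have $\nu(B_j)=0$. The lemma of the previous paragraph (Lemma~\ref{L4.1}(ii)) already provides the key estimate of $\int \lvert h_j\rvert^\alpha \,d\nu$ uniformly in the frequency parameters, so the work is essentially book-keeping.

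First, fix any $\beta\in (j\alpha, 1]$ and let $\nu$ be a finite U$\beta$H measure. Since $\alpha < 1/(p-1) \le 1$, the subadditivity inequality $\bigl(\sum a_i\bigr)^\alpha \le \sum a_i^\alpha$ for nonnegative terms applies. I would raise the sum defining $B_j$ to the $\alpha$-th power and integrate against $\nu$:
\begin{align*}
\int \left( \sum_{k_1,\dots,k_j} \lvert c_{k_1}\dotsm c_{k_j} h_j(\eta;\phi_{k_1},\dots,\phi_{k_j})\rvert \right)^{\!\alpha} d\nu(\eta)
&\le \sum_{k_1,\dots,k_j} \lvert c_{k_1}\dotsm c_{k_j}\rvert^\alpha \int \lvert h_j(\eta;\phi_{k_1},\dots,\phi_{k_j})\rvert^\alpha d\nu(\eta).
\end{align*}
Since $j\alpha<\beta$, Lemma~\ref{L4.1}(ii) (applied with $J=j$ and the given $\alpha$) bounds the inner integral by $C_j D_{j\alpha}$ uniformly in $\phi_{k_1},\dots,\phi_{k_j}$. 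The remaining coefficient sum factors as $\bigl(\sum_k \lvert c_k\rvert^\alpha\bigr)^j$, which is finite by \eqref{1.7}. Therefore the displayed integral is finite, so the integrand is finite $\nu$-a.e., i.e.\ $\nu(B_j)=0$.

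Since $\beta\in(j\alpha,1]$ was arbitrary and any finite U$\beta$H measure annihilates $B_j$, the Frostman characterization of Hausdorff dimension (if $\dim_H B_j > j\alpha$, some $H^s(B_j)>0$ with $s>j\alpha$, which by Frostman's lemma yields a nontrivial finite U$s$H measure on $B_j$, a contradiction) gives $\dim_H B_j \le j\alpha$. I do not expect any real obstacle: the only points to watch are (a) that $\alpha<1$ so the subadditivity step is valid, and (b) that the Frostman estimate in Lemma~\ref{L4.1}(ii) is available exactly in the regime $j\alpha<\beta$, which is forced by the choice $\beta>j\alpha$.
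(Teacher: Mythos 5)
Your proposal is correct and follows essentially the same route as the paper: a Frostman-type contradiction combined with the uniform bound of Lemma~\ref{L4.1}(ii) and the finiteness of $\bigl(\sum_k \lvert c_k\rvert^\alpha\bigr)^j$ from \eqref{1.7}. The only cosmetic difference is that you apply subadditivity $\bigl(\sum a_i\bigr)^\alpha\le\sum a_i^\alpha$ before integrating, whereas the paper integrates $\sum \lvert\cdot\rvert^\alpha$ directly and then uses $\sum x_n^\alpha<\infty\implies\sum x_n<\infty$ for $\alpha\in(0,1]$; these are equivalent.
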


\begin{proof}
Denote by $T$ the set of $\eta$ where the condition \eqref{1.11} fails. If the Hausdorff dimension of $T$ was greater than $j\alpha$, then for some $\beta>j\alpha$ we would have $h^\beta(T)=\infty$. Thus, there would exist a subset $T' \subset T$ such that $\nu = \chi_{T'} h^\beta$ is a finite U$\beta$H measure with $\nu(T)>0$ (see, e.g.,\cite[Theorem 5.6]{Falconer86}).

Then Lemma~\ref{L4.1}(ii) implies
\[
\int \sum_{k_1,\dots,k_j=1}^\infty \left\lvert  c_{k_1} \dotsm c_{k_j} h_j(\eta;\phi_{k_1},\dots,\phi_{k_j}) \right\rvert^\alpha d\nu(\eta)  \le   C_{j} D_{j \alpha} \left( \sum_{k=1}^\infty \lvert c_k \rvert^\alpha\right)^j.
\]
Since the integral is finite, the integrand must be $\nu$-a.e.\ finite. However, for $\alpha \in (0,1]$ and a sequence $x_n$ of nonnegative numbers,
\[
\sum_{n=1}^\infty x_n^\alpha < \infty \implies \sum_{n=1}^\infty x_n < \infty;
\]
thus, \eqref{1.11} holds for $\nu$-a.e.\ $\eta$, contradicting $\nu(T)>0$.
\end{proof}

\begin{proof}[Proof of Theorem~\ref{T1.1}]
Conditions (i)--(iii) of Lemma~\ref{L1.3} are trivially satisfied. By Lemma~\ref{L4.2}, the condition \eqref{1.7} holds for all $j=1,\dots,p-1$ away from a set of Hausdorff dimension at most $(p-1)\alpha$. Thus, by Lemma~\ref{L1.3}, the Hausdorff dimension of the set $S$ is at most $(p-1)\alpha$ (the map $\eta \mapsto \frac{\eta^2}4$ obviously preserves Hausdorff dimension).

By the results of Gilbert--Pearson~\cite{GilbertPearson87}, Behncke~\cite{Behncke91} and Stolz~\cite{Stolz92}, boundedness of solutions for $E\in (0,\infty) \setminus S$ implies that the canonical spectral measure $d\mu$ and Lebesgue measure are mutually absolutely continuous on $(0,\infty) \setminus S$, which completes the proof.
\end{proof}

\section{Proof of Corollary~\ref{C1.2}}\label{S5}

Corollary~\ref{C1.2} is a special case of Theorem~\ref{T1.1}, with all the $\gamma_k(x)$ taken to be equal to the same function $\gamma(x)$; by the following lemma, $V(x) \in L^p$ then implies $\gamma(x) \in L^p$, and the corollary is immediate.

\begin{lemma}
Let $W(x)$ be (uniformly) almost periodic and not identically zero, and let $\gamma:(0,\infty) \to \mathbb{R}$ have bounded variation. Let $p\in [1,\infty)$. Then  $W \gamma \in L^p(0,\infty)$ implies $\gamma \in L^p(0,\infty)$.
\end{lemma}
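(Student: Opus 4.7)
The plan is to exploit Bohr almost periodicity of $W$ together with its uniform continuity to produce a relatively dense family of subintervals of $(0,\infty)$ on which $|W|$ stays bounded away from zero, and then transfer information about $W\gamma$ back to $\gamma$ via the bounded variation hypothesis on $\gamma$. The main obstacle will be extracting, from the almost periodic structure, a quantitative uniform lower bound on $|W|$ over such a family; the rest of the argument is essentially routine.

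First I would observe that since $W\not\equiv 0$ is uniformly almost periodic, there is a point $x_0$ with $|W(x_0)|=2\eta_0>0$. Uniform continuity of $W$ gives some $r>0$ with $|W(x)-W(y)|<\eta_0/2$ whenever $|x-y|<r$, while the Bohr criterion produces $L>0$ such that every interval $[a,a+L]$ contains an $(\eta_0/2)$-almost period $\tau$, i.e.\ one with $\sup_x|W(x+\tau)-W(x)|<\eta_0/2$. Two applications of the triangle inequality then give $|W(x+\tau)|\ge\eta_0$ for all $x$ with $|x-x_0|<r$. Setting $L'=L+2r$, I conclude that every subinterval of $(0,\infty)$ of length $L'$ contains a further subinterval of length $2r$ on which $|W|\ge\eta_0$.

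Next I would partition $(0,\infty)$ into consecutive intervals $\{I_n\}$ of length $L'$ and select in each one a subinterval $J_n\subset I_n$ of length $2r$ on which $|W|\ge\eta_0$. For $x\in I_n$ and $y\in J_n$, bounded variation yields $|\gamma(x)|\le|\gamma(y)|+M_n$ with $M_n=\Var(\gamma,I_n)$, hence $|\gamma(x)|^p\le 2^{p-1}(|\gamma(y)|^p+M_n^p)$. Averaging this in $y\in J_n$, integrating in $x\in I_n$, and using $|W|\ge\eta_0$ on $J_n$,
$$\int_{I_n}|\gamma|^p\,dx\le\frac{2^{p-1}L'}{2r\,\eta_0^p}\int_{I_n}|W\gamma|^p\,dy+2^{p-1}L'\,M_n^p.$$

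Finally I would sum in $n$. The first term is controlled by a constant multiple of $\int_0^\infty|W\gamma|^p<\infty$. For the second, since the $I_n$ are disjoint, $\sum_n M_n\le\Var(\gamma,(0,\infty))=:V<\infty$, and each $M_n\le V$, so $\sum_n M_n^p\le V^{p-1}\sum_n M_n\le V^p<\infty$. This produces $\int_0^\infty|\gamma|^p<\infty$, completing the proof.
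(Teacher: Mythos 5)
Your proof is correct, and it follows the same basic skeleton as the paper's: on each window of fixed length, compare $\gamma$ at an arbitrary point to $\gamma$ on a ``good'' set where $W$ is quantitatively controlled, absorb the discrepancy into the variation of $\gamma$ over that window, and sum over windows using that the per-window variations add up to at most $\Var(\gamma,(0,\infty))$. The two genuine differences are in the ingredients. First, where you extract a pointwise lower bound $|W|\ge\eta_0$ on a relatively dense family of subintervals directly from the Bohr definition (almost period plus uniform continuity), the paper instead invokes the standard averaged lower bound $\int_a^{a+T}|W|\,dx\ge\delta$ uniformly in $a$, citing Besicovitch; your argument is self-contained and in fact proves a stronger statement than the one the paper cites, at the cost of a slightly longer setup. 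Second, the paper first reduces to $p=1$ by observing that $|W|^p$ is again almost periodic and $|\gamma|^p$ is again of bounded variation, whereas you run the argument for general $p$ directly, paying for it with the convexity inequality $(a+b)^p\le 2^{p-1}(a^p+b^p)$ and the estimate $\sum_n M_n^p\le V^{p-1}\sum_n M_n$. Both routes are equally valid; the paper's reduction keeps the main inequality cleaner, while yours avoids the (easy but not entirely free) verification that $|W|^p$ and $|\gamma|^p$ inherit the relevant properties.
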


\begin{proof}
If $W$ is almost periodic, then so is $\lvert W\rvert^p$, since the map $t\mapsto \lvert t \rvert^p$ is uniformly continuous on compacts. If $\gamma$ has bounded variation, then so does $\lvert\gamma\rvert^p$, since (by the mean value theorem for $t\mapsto t^p$)
\[
\left\lvert \lvert \gamma(x)\rvert^p-\lvert\gamma(y)\rvert^p\right\rvert \le p \lVert \gamma\rVert_\infty^{p-1} \lvert \gamma(x)-\gamma(y)\rvert.
\]
 Thus, it suffices to prove the lemma for $p=1$.

We may pick $T>0$ for which there exist $\delta, \Delta \in (0,\infty)$ such that for all $a\ge 0$,
\begin{equation}\label{5.1}
\delta \le  \int_{a}^{a+T} \lvert W(x) \rvert dx  \le \Delta.
\end{equation}
The upper bound is trivial with $\Delta = T \lVert W\rVert_\infty$, whereas existence of the lower bound for large enough $T$ is a standard fact for non-zero almost periodic functions (see, e.g., \cite[p. 20]{Besicovitch55}).

For $x, y\in [a,a+T]$, by the triangle inequality,
\[
\lvert \gamma(y) \rvert \le  \lvert \gamma(x) \rvert + \lvert \gamma(x) - \gamma(y) \rvert \le  \lvert \gamma(x) \rvert + \Var(\gamma,[a,a+T]).
\]
Integrating in $y$ from $a$ to $a+T$, we conclude
\[
\frac 1T \int_a^{a+T} \lvert \gamma(y) \rvert dy \le  \lvert \gamma(x) \rvert + \Var(\gamma,[a,a+T]).
\]
Multiplying by $\lvert W(x) \rvert$, integrating in $x$ from $a$ to $a+T$, and using \eqref{5.1}, we obtain
\[
 \frac \delta T \int_a^{a+T} \lvert \gamma(y) \rvert dy  \le \int_{a}^{a+T} \lvert W(x) \gamma(x) \rvert dx +  \Var(\gamma,[a,a+T]) \Delta.
\]
Specialize to $a=nT$ and sum in $n$ to obtain
\[
\frac \delta T \int_0^\infty \lvert \gamma(y) \rvert dy \le  \int_{0}^{\infty} \lvert W(x) \gamma(x) \rvert dx  + \Var(\gamma,[0,\infty))  \Delta,
\]
which completes the proof since the right hand side is finite.
\end{proof}

\section{An outline of the proofs of Theorems~\ref{T1.4} and \ref{T1.5}}\label{S6}

The proofs of Theorems~\ref{T1.4} and \ref{T1.5} follow the same ideas, adapted to the discrete case. We present a discussion of the necessary adaptations, omitting the computational details.

In \cite{Lukic1}, we have developed an iterative scheme for proving theorems similar to Theorems~\ref{T1.4} and \ref{T1.5}, but where \eqref{1.13} and \eqref{1.17} are finite sums. For both OPRL and OPUC, the essential spectrum can be parametrized by $\eta \in [0,2\pi]$, by $\eta\mapsto 2\cos(\eta/2)$ or by $\eta\mapsto e^{i\eta}$, respectively. The suitable analog of Pr\"ufer variables can be presented in a unified way for both OPRL and OPUC (see \cite[Section 4]{Lukic1}), as sequences $r_n, \theta_n$ obeying the recursion relations
\begin{align}
\frac{r_{n+1}}{r_n} & = \frac{ \lvert 1 - \alpha_n  e^{i[(n+1)\eta + 2 \theta_n]}  - c \bar \alpha_n  \rvert }{ \sqrt{ (1 - c \alpha_n)(1   - c \bar \alpha_n) - \alpha_n \bar \alpha_n }  }, \label{6.1}  \\
e^{2 i (\theta_{n+1} - \theta_n)} & = \frac{ 1 - \bar \alpha_n e^{-i[(n+1)\eta + 2 \theta_n]} - c \alpha_n }{1- \alpha_n e^{i[(n+1)\eta + 2 \theta_n]} - c \bar \alpha_n },  \label{6.2}
\end{align}
where
\begin{equation*}
c = \begin{cases}
0 & \text{for OPUC,} \\
1 & \text{for OPRL.}
\end{cases}
\end{equation*}
Here, for OPUC, $\alpha_n$ are just Verblunsky coefficients, whereas for OPRL,
\[
\alpha_n  = \frac{a_n^2 - 1 + e^{i \eta/2} b_{n+1}}{e^{i\eta} - 1}.
\]
Thus, in either case, the sequence $\alpha_n$ is of the form \eqref{1.13}. To discuss boundedness of the sequence $r_n$, we estimate partial sums of \eqref{6.1},
\begin{equation}
\sum_{n=M}^N e^{-in\phi} \Gamma_n e^{i k [(n+1)\eta + 2 \theta_n]},
\end{equation}
where
\begin{align}
\Gamma_n & = \gamma_n^{(k_1)} \cdots \gamma_n^{(k_s)} \bar\gamma_n^{(l_1)} \cdots \bar \gamma_n^{(l_t)}, \\
\phi  & = \phi_{k_1}+\cdots + \phi_{k_s} - \phi_{l_1} - \cdots - \phi_{l_t}, \label{6.5}
\end{align}
and $s+t<K$. The analog of Lemma~\ref{L2.1} becomes (compare with \cite[Lemma 6.1]{Lukic1})

\begin{lemma} With notation as above,
\[
\sum_{n=M}^N \left( (e^{-i(k\eta-\phi)} -1) e^{-in\phi}  \Gamma_n  e^{i k [(n+1)\eta + 2 \theta_n]}   -  e^{-in\phi}  \Gamma_n  e^{ik [(n+1)\eta + 2 \theta_n]}  \bigl( e^{2 i k ( \theta_{n+1} - \theta_n)}  - 1  \bigr) \right) \le 2\tau^{s+t}
\]
where $\tau$ is a uniform bound on the variation of the $\gamma^{(l)}$,
\[
\tau = \sup_l \sum_{n=M}^\infty \lvert \gamma^{(l)}_{n+1} - \gamma^{(l)}_n \rvert.
\]
\end{lemma}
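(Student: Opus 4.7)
The plan is to follow the Fubini-based approach of Lemma~\ref{L2.1}, but with discrete analogues of the tools. First I would introduce the shorthand $f_n = e^{ik[(n+1)\eta + 2\theta_n]} e^{-in\phi}$. Since $\lvert f_n\rvert=1$ and a one-line manipulation gives
\[
f_{n+1} = f_n\, e^{i(k\eta - \phi)} e^{2ik(\theta_{n+1}-\theta_n)},
\]
the summand in the statement equals $-e^{-i(k\eta-\phi)}\,\Gamma_n(f_{n+1}-f_n)$. Hence the estimate reduces to bounding $\bigl\lvert \sum_{n=M}^N \Gamma_n (f_{n+1}-f_n)\bigr\rvert \le 2\tau^{s+t}$, which is the direct discrete analogue of $\int \psi'\Gamma\, dx$ in the continuous proof.

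Next I would encode bounded variation in summation form. Conditions (i)--(ii) of Theorem~\ref{T1.4} force $\gamma_n^{(l)} \to 0$, so telescoping gives
\[
\gamma_n^{(l)} = \sum_{m \ge n} u_m^{(l)}, \qquad u_m^{(l)} = \gamma_m^{(l)} - \gamma_{m+1}^{(l)},
\]
with $\sum_m \lvert u_m^{(l)}\rvert \le \tau$. This is the discrete counterpart of the measure decomposition $\gamma_k(x) = \int_{[x,\infty)} s_k\, d\nu_k$ used in the continuous argument. Expanding the product writes $\Gamma_n$ as a multi-sum $\sum_{j_1,\dots,j_{s+t}\ge n} U(j_1,\dots,j_{s+t})$, where $U$ is a product of $u_{j_i}^{(k_i)}$ factors (with complex conjugates for the $l_i$ indices), and the total sum of $\lvert U\rvert$ over all indices $j_1,\dots,j_{s+t}\ge M$ is bounded by $\tau^{s+t}$.

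Finally, I would apply Fubini--Tonelli to swap the $n$-sum and the $j$-sums, obtaining
\[
\sum_{n=M}^N \Gamma_n (f_{n+1}-f_n) = \sum_{j_1,\dots,j_{s+t} \ge M} U(j_1,\dots,j_{s+t}) \bigl( f_{\min(N,\min_i j_i)+1} - f_M \bigr),
\]
since the inner sum over $n \in [M,\min(N,\min_i j_i)]$ telescopes. Each bracket has modulus at most $2$ because $\lvert f_n\rvert=1$, and summing $\lvert U\rvert$ over the $j$-indices then delivers the claimed bound $2\tau^{s+t}$. The one point requiring care is the Fubini bookkeeping---ensuring the joint summability is absolute so that the interchange is legal and that the range of $n$ is handled correctly after the swap---but the uniform BV bound $\sum \lvert U\rvert \le \tau^{s+t}$ together with the finiteness of the $n$-range supplies exactly this, paralleling the measure-theoretic Fubini step in the continuous case.
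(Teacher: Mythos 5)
Your proof is correct and is precisely the discrete analogue of the paper's proof of Lemma~\ref{L2.1}: the reduction of the summand to $-e^{-i(k\eta-\phi)}\Gamma_n(f_{n+1}-f_n)$, the telescoping representation $\gamma_n^{(l)}=\sum_{m\ge n}u_m^{(l)}$ with $\sum_m\lvert u_m^{(l)}\rvert\le\tau$, and the Fubini--Tonelli interchange mirror exactly the measure decomposition and Fubini step used there. The paper itself omits the proof of this discrete lemma (deferring to Lemma~6.1 of \cite{Lukic1}), but your summation-by-parts argument is the intended one and all steps check out.
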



This lemma drives an iterative procedure, and it is proved that the Pr\"ufer amplitude $r_n$ is bounded in $n$ if certain small divisor conditions are met. The singularities involved are of the form
\[
\frac 1{e^{-i(\eta - \phi)} - 1},
\]
with $\phi$ as in \eqref{6.5}, and since these are first order singularities at $\eta \in \phi+2\pi\mathbb{Z}$, they can be handled as in Section~\ref{S4}. For instance, in the $\ell^2$ case, the Pr\"ufer amplitude is bounded if
\[
\sum_{l=1}^\infty \left \lvert  \frac {c_l}{e^{-i(\eta - \phi_l)} - 1}   \right\rvert < \infty.
\]

In the general case, the algebra is more complicated than for Schr\"odinger operators, and one needs to work with functions $f_{I,J,K,L}$, $g_{I,J,K,L}$ parametrized by four indices $I,J,K,L$, as defined in \cite[Section 8]{Lukic1}. The proof needs identities analogous to those in Lemma~\ref{L2.2}. For some of those identities, \cite{Lukic1} avoided finding them explicitly, and instead proved by contradiction that the functions obey desired properties. This indirect proof is easily adapted to the current needs; for instance, if in Section~\ref{S2} we hadn't known that \eqref{2.22} held, but we knew that $f_{J,0}-\breve f_{J,0} = 0$ whenever $\phi_1+\dots + \phi_J=0$, that and the fact that $f_{J,0}-\breve f_{J,0}$ is a rational function would suffice to conclude existence of a rational function $\mathcal G_{J,0}$ such that \eqref{2.22} holds.

A closer look at the algebra shows that for OPRL, we obtain small divisor conditions for integers $j$ with $j<p$, whereas for OPUC, we only obtain small divisor conditions for odd integers $j$ with $j<p$. This explains why Theorems~\ref{T1.4} and \ref{T1.5} give different estimates on the Hausdorff dimension, as was already motivated in Remark~\ref{R1.3}.

\bibliographystyle{amsplain}

\providecommand{\bysame}{\leavevmode\hbox to3em{\hrulefill}\thinspace}
\providecommand{\MR}{\relax\ifhmode\unskip\space\fi MR }
\providecommand{\MRhref}[2]{%
  \href{http://www.ams.org/mathscinet-getitem?mr=#1}{#2}
}
\providecommand{\href}[2]{#2}

\end{document}